\newtheorem{mydef}{Definition }
\newtheorem{mytheorem}{Theorem}
\newtheorem{mycorollary}[mytheorem]{Corollary}
\newtheorem{mylemma}[mytheorem]{Lemma}
\newtheorem{myclaim}[mytheorem]{Claim}
\newtheorem{myproposition}[mytheorem]{Proposition}
\newtheorem{remark}{Remark}
\begin{document}
%
% paper title
\title{\LARGE{Flatness-based Deformation Control of an Euler-Bernoulli Beam with In-domain Actuation}
\footnote
{This paper is a preprint of a paper submitted to IET Control Theory and Applications and is subject to Institution of Engineering and Technology Copyright. If accepted, the copy of record will be available at IET Digital Library}
}
%
%
% author names and IEEE memberships
% note positions of commas and nonbreaking spaces ( ~ ) LaTeX will not break
% a structure at a ~ so this keeps an author's name from being broken across
% two lines.
% use \thanks{} to gain access to the first footnote area
% a separate \thanks must be used for each paragraph as LaTeX2e's \thanks
% was not built to handle multiple paragraphs
\author{Amir Badkoubeh,  Jun Zheng, and Guchuan Zhu% <-this % stops a space
%\thanks{This work was not supported by any organization}% <-this % stops a space
%\thanks{This work is supported in part by NSERC (Natural Science and Engineering Research Council of
%        Canada) and ReSMiQ (Regroupement Stratégique en Microélectronique du Québec).}% <-this % stops a space
\thanks{A. Badkoubeh with the the Department of Electrical Engineering,
        Ecole Polytechnique de Montreal, P.O. Box 6079, Station Centre-Ville, Montreal, QC, Canada H3T 1J4.
        E-mail: {\tt\small amir.badkoubeh@polymtl.ca}}%
\thanks{J. Zheng is with the Department of Basic Course, Southwest Jiaotong University,
          Emeishan, Sichuan 614202, China.
        E-mail: {\tt\small zheng@xx}}%
\thanks{G. Zhu is with the Department of Electrical Engineering, Ecole Polytechnique
        Montreal, P.O. Box 6079, Station Centre-Ville,
        Montreal, QC, Canada H3T 1J4.
        E-mail: {\tt\small guchuan.zhu@polymtl.ca}}%
}

% The paper headers
%\markboth{IEEE Transactions on Control Systems Technology,~Vol.~xx, No.~xx,~20xx}{In-Domain Control of a Heat Equation}
% The only time the second header will appear is for the odd numbered pages
% after the title page when using the twoside option.
%
% *** Note that you probably will NOT want to include the author's name in ***
% *** the headers of peer review papers.                                   ***

% If you want to put a publisher's ID mark on the page
% (can leave text blank if you just want to see how the
% text height on the first page will be reduced by IEEE)
%\pubid{0000--0000/00\$00.00~\copyright~2002 IEEE}

% use only for invited papers
%\specialpapernotice{(Invited Paper)}

% make the title area
\maketitle

\begin{abstract}                           % Abstract of not more than 200 words.
This paper addresses the problem of deformation control of an Euler-Bernoulli beam with in-domain actuation. The proposed control scheme consists in first relating the system model described by an inhomogeneous partial differential equation to a target system under a standard boundary control form. Then, a combination of closed-loop feedback control and flatness-based motion planning is used for stabilizing the closed-loop system around reference trajectories. The validity of the proposed method is assessed through well-posedness and stability analysis of the considered systems. The performance of the developed control scheme is demonstrated through numerical simulations of a representative micro-beam.
\end{abstract}
\maketitle
\section{Introduction}\label{Sec: Introduction}
The present work addresses the control of an Euler-Bernoulli beam with in-domain actuation described by an inhomogeneous partial differential equation (PDE). A motivating example of such a problem arises from the deformation control of micro-beams, shown in Fig.~\ref{freediagram}. This device is a simplified case of deformable micro-mirrors that are extensively used in adaptive optics \cite{Ref7,Vogel:2006}. Due to technological restrictions in the design, the fabrication and the operation of micro-devices, design methods that may lead to control structures with a large number of sensors and actuators are not applicable to microsystems with currently available technologies.

One of the standard methods to deal with the control of inhomogeneous PDEs is to discretize the PDE model in space to obtain a system of lumped ordinary differential equations (ODEs) \cite{Bensoussan:2006,Morris:2010}. Then, a variety of techniques developed for the control of finite-dimensional systems can be applied. However, in addition to the possible instability due to the phenomenon of spillover \cite{Balas:1978}, the increase of modeling accuracy may lead to high-dimensional and complex feedback control structures, requiring a considerable number of actuators and sensors for the implementation. Therefore, it is of great interest to directly deal with the control of PDE models, which may result in control schemes with simple structures.

\begin{figure}[t]
  \centering
  \includegraphics[scale=0.12]{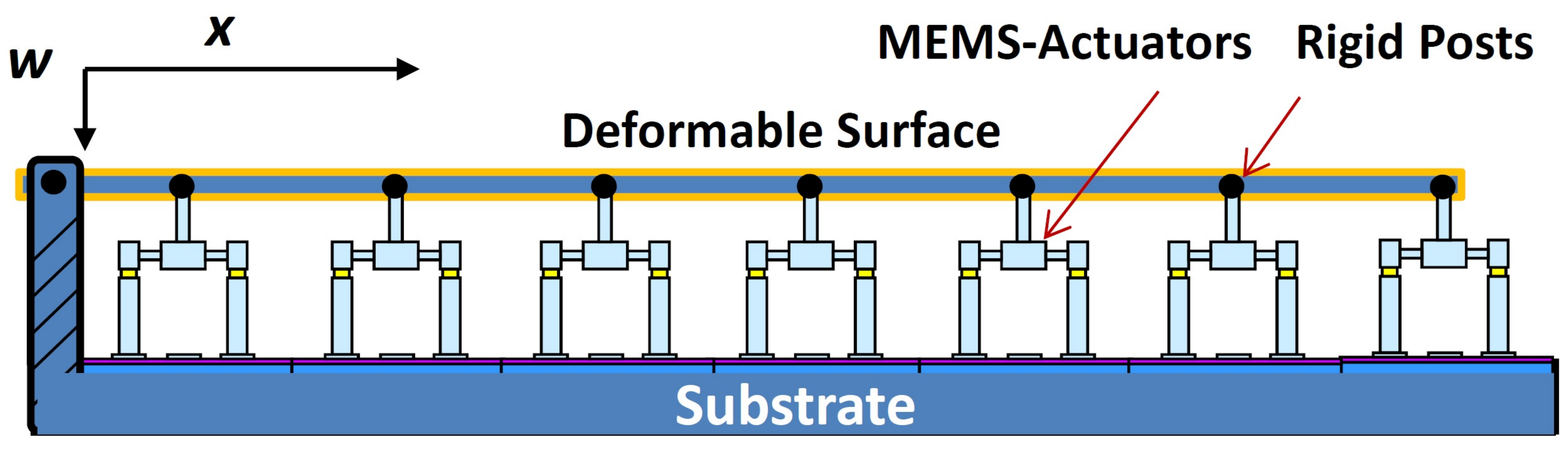}
  \caption{Schematic of the deformable microbeam.}
  \label{freediagram}
\end{figure}

There exists an extensive literature on the control of flexible beams described by PDEs in which the majority of the reported work deal with the stabilization problem by means of boundary control (see, e.g., \cite{Bensoussan:2006,Guo:2008,Krstic:2008,Luo:1999,He:2015_1,He:2015_2}). Placing actuators in the domain of the system will lead to inhomogeneous PDEs \cite{Bensoussan:2006,ammari:2000,Lasiecka:2000}. The stabilization problem of Euler-Bernoulli beams with in-domain actuation is considered in, e.g., \cite{ammari:2000,Ammari:2011,Banks:1996,Delfour:1987,Gall2007}. The deformation control of beams has gained in recent years an increasing attention and impotence (see, e.g., \cite{Banks:2002_TCST,Meurer:2013,Schorkhuber:2012_TCST}).

In the present work, we employ the method of flatness-based control, which has been applied to a variety of infinite-dimensional systems (see, e.g., \cite{Meurer:2013,Aoustin1997, Dunbar:2003,Fliess:1999-CDC,Laroch2000,Petit:2001,Petit2002,Rabbani:2011,Rudolph2003}). Nevertheless, applying this tool to systems controlled by multiple in-domain actuators leads essentially to a multiple-input multiple-output (MIMO) problem, which still remains a challenging topic.
%{\color{red}
It is noticed that a scheme proposed recently in \cite{Meurer:2013,Schorkhuber:2012_TCST} tackles this issue by utilizing the Weierstrass-factorized representation of the spectrum of the input-output dynamics. Nevertheless, an early truncation is still required in order to obtain a finite-dimensional input-output map, which is essential to apply the technique of flat systems.

The control scheme developed in this paper is based on an approach aimed at avoiding early truncations in control design procedure.
%}
Specifically, it consists in first relating the original inhomogeneous model with pointwise actuation to a system in a standard boundary control form to which the technique of flatness-based motion planning may be applied for feedforward control. By using the technique of lifting, this boundary-controlled system can be transformed into a regularized inhomogeneous system driven by sufficiently smooth functions. It is shown that the steady-state solution of the regularized system can approximate that of the original system. A standard closed-loop feedback control is used to stabilize the original inhomogeneous system. Moreover, the MIMO issue is solved by splitting the reference trajectory into a set of sub-trajectories based on an essential property of the Green's function. Well-posedness and stability analysis of the considered systems is also carried out, which is essential for the validation of the developed control scheme.
%{\color{red}
It is worth noting that the approach proposed in this paper does not involve any early truncation. Moreover, it is shown that the invertibility of the resulting input-output map can be guaranteed \emph{a priori}. This allows for the control design to be carried out directly with the original PDE model in a systematic manner.
%}

The remainder of the paper is organized as follows. Section~\ref{Sec: mathematical modelling} describes the model of a representative in-domain actuated deformable beam. Section~\ref{Sec: Mapping} introduces the relationship between the interior actuation and the boundary control and examines the well-posedness of the considered systems. Section~\ref{Sec: Stability} addresses the feedback control and stability issues. Section~\ref{Sec: Controller design} deals with motion planning for feedforward control. A simulation study is carried out in Section~\ref{simulation}, and, finally, some concluding remarks are presented in Section~\ref{Sec: Conclusion}.

%%%%%%%%%%%%%%%%%%%%%%%%%%%%%%%%%%%%%%%%%%%%%%%%%%%%%%%%%%%%%%%%%%%%%%%%%%%%%%%%
%%%%%%%%%%%%%%%%%%%%%%%%%%%%%%%%%%%%%%%%%%%%%%%%%%%%%%%%%%%%%%%%%%%%%%%%%%%%%%%%%%%%%%%%%%%%%%%%%%%%%%%%%%%%%%%%%%%%%%%
\section{Model of the In-domain Actuated Euler-Bernoulli Beam}\label{Sec: mathematical modelling}
As shown in Fig.~\ref{freediagram}, the considered structure
consists of a continuous flexible beam and an array of actuators
connected to the beam via rigid spots. As the dimension of the spots
connecting the actuators to the continuous surface is much smaller
than the extent of the beam, the effect of the force generated by
actuators can be considered as a pointwise control represented by
Dirac delta functions concentrating at rigid spots. Furthermore, it is supposed that the structure is supported at the left end, terminated to the frame, and is suspended at the right end, vertically suspended on a moving actuator,
corresponding to a \emph{simply supported-shear hinged}
configuration \cite{Delfour:1987}.

As the developed scheme uses one actuator for stabilizing feedback control,
for notational convenience we consider a setup with $N+1$
actuators. The displacement of the beam at the position $x$ and the instance $t$
is denoted by $w(x,t)$. The derivatives of $w$ with respect to its variables are
denoted by $w_{x}$ and $w_{t}$, respectively. Suppose that the mass
density and the flexural rigidity of the beam are constant.
Then, the dynamic transversal displacement
of the beam with pointwise actuators located at $\{x_1, x_2 ,
\cdots, x_N, x_{N+1}\}$ in a normalized coordinate can be described by the following PDE
\cite{Banks:1996,Delfour:1987,Ref10}:
\begin{subequations}\label{indomain_form}
\begin{align}
  &w_{tt}(x,t)+ w_{\mathit{xxxx}}(x,t) = \sum_{j=1}^{N+1} \alpha_j(t)\delta(x-x_j), \;x\in \left(0,1\right), t > 0,\label{indomain_form1} \\
  &w(0,t)=w_{x}(1,t)=w_{\mathit{xx}}(0,t)=w_{\mathit{xxx}}(1,t) = 0, \;t> 0,\label{BC1}\\
  &w(x,0) = h_{0}(x), w_{t}(x,0)=h_{1}(x), \;x\in (0,1), \label{IV}
\end{align}
\end{subequations}
where $x$ is a normalized variable spanned over the domain $(0,1)$, $\delta(x-x_j)$ is the Dirac mass concentrated at the point
$x_j \in \left(0,1\right]$ and $\alpha_j : t
\mapsto \mathbb{R}$, $j = 1, \ldots, N+1$, are the control signals.
$h_0$ and $h_1$ represent the initial value of the beam. Without loss of generality, we
assume that $0<x_1<x_2<\cdots<x_N<x_{N+1}\leq 1$.

\begin{remark}
The considered problem can also be modeled as a serially connected
beam \cite{Delfour:1987}. Indeed, these two models are equivalent in
the sense that they lead to the same abstract linear system in
variational form (see, e.g., \cite{Rebarber:1995}).
\end{remark}

The control objective is to steer the beam to a desired form in steady state via in-domain pointwise actuation.

The model given in (\ref{indomain_form}) is a nonstandard PDE due to the unbounded inputs, represented by Dirac functions, on the right-hand side of
(\ref{indomain_form1}). Therefore, we need to invoke the weak derivative in the theory of distributions \cite{Pederson2000} and discuss solutions of the considered problem in a weak sense. To this end, we start by defining the convergence on $C^{\infty}_{0}(\Omega)$.

%{\color{red}
\begin{mydef}\label{def: convergence}
Let $\Omega$ be a domain in  $\mathbb{R}$. A sequence $\{\varphi_{j}\}$ of functions belonging to $C^{\infty}_{0}(\Omega)$ converge to $\varphi\in C^{\infty}_{0}(\Omega)$, if
\begin{enumerate}[(i)]
  \item there exists $K\subset \Omega$ such that $ \text{supp} (\varphi_{j}-\varphi )\subset K$ for every $j$, and
  \item $\lim_{j\rightarrow \infty} \partial ^n \varphi_{j}(x)=\partial^n\varphi(x)$ uniformly on $K$ for all $n\geq 0$.
\end{enumerate}
\end{mydef}
%}

The linear space $C^{\infty}_{0}(\Omega)$ having the above property of convergence is called fundamental space, denoted by $\mathscr{D}(\Omega)$. The space of all linear continuous functionals on $ \mathscr{D}(\Omega)$, denoted by $\mathscr{D}'(\Omega)$, is called the space of (Schwartz) distributions on $\Omega$, which is the dual of $ \mathscr{D}(\Omega)$ (see, e.g., Chapter 1 of \cite{AF2003} for more properties of $\mathscr{D}(\Omega)$ and $\mathscr{D}'(\Omega)$).

Denote the set $\Phi$ by
\begin{equation}\label{space}
    \Phi = \left\{ \phi \in H^2\left(0,1\right) ; \phi(0)= \phi_x(1)=0 \right\}.
\end{equation}
We give the definition of weak solution of \eqref{indomain_form}.
%%%%%%%%%%%%%%%%%%%%%%%%%%%%%%% DEFINITION OF WEAK SOLUTION %%%%%%%%%%%%%%%%%%%%%%%%%%%%%%%%%%%%%%%%%%%%%%%%%%%%%%%%%%%%%%%%%%%%%%%%%%%%%%
\begin{mydef}\label{Def:weak_solution}
Let $T>0$ and $\alpha_{i}\in L^{2}\left(0,T\right)$ for $i=1,2,...,N+1$. Let $h_{0}\in \Phi,\ h_{1} \in L^{2}\left(0,1\right)$. A weak solution to the
problem~\eqref{indomain_form} is a function $w\in
C(\left[0,T\right]; \Phi)\cap C^1(\left[0,T\right]; L^2(0,1))$,
satisfying
\begin{align}
 w(x,0)= h_{0}(x),\ \ \ \ x\in \left(0,1\right),\notag
\end{align}
such that, for every $v\in C^1(\left[0,T\right]; \mathscr{D}(0,1))$, one has for almost every $\tau\in [0,T]$
\begin{equation}\label{Eq: weak_solution}
\begin{split}
&\int_{0}^{1}w_{t}(x,\tau)v(x,\tau)\text{d}x-\int_{0}^{1}h_{1}(x)v(x,0)
\text{d}x\\
& -\int_{0}^{\tau}\int_{0}^{1}w_{t}(x,t)v_{t}(x,t)\text{d}x\text{d}t\\
&+\int_{0}^{\tau}\int_{0}^{1}
w_{\mathit{xx}}(x,t)v_{\mathit{xx}}(x,t)\text{d}x\text{d}t \\
=& \sum_{i=1}^{N+1}
\int_{0}^{\tau}\int_{0}^{1}\alpha_{i}(t)\delta(x-x_{i})v(x,t)\text{d}x\text{d}t.
\end{split}
\end{equation}
\end{mydef}

%%%%%%%%%%%%%%%%%%%%%%%%%%%%%%%%%%%%%%%%%%%%%%%%%%%%%%%%%%%%%%%%%%%%%%%%%%%%%%%%%%%%%%%%%%%%%%%%%%%%%%%%%%%%%%%%%%%%%%%%%%%%%%%%%%%%%%%%%%%%%%%%%
\section{In-domain Actuation Design via Boundary Control}\label{Sec: Mapping}
%In this section, we establish the mapping between in-domain actuation and boundary control.
%We also address well-posedness of the corresponding Cauchy problem and conditions
%to guarantee the existence and the invertibility of such a mapping.
\subsection{Relating In-domain Actuation to Boundary Control}\label{Sec: map to the boundary}
In the scheme developed in the present work, we use $N$ feedforward control signals to deform the beam, while dedicating the feedback stabilizing control to one actuator. For notational simplicity, we assign the feedback control to the $(N+1)^{\text{th}}$ actuator. Hence, the dynamic of the desired trajectory driven by the feedforward control can be expressed as
\begin{subequations}\label{indomain_form ff}
\begin{align}
  &w^{d}_{tt}(x,t)+ w^{d}_{\mathit{xxxx}}(x,t) = \sum_{j=1}^{N} \alpha_j(t)\delta(x-x_j),\;x\in \left(0,1\right), t > 0,\label{indomain_form1 ff} \\
  &w^{d}(0,t)=w^{d}_{x}(1,t)=w^{d}_{\mathit{xx}}(0,t)=w^{d}_{\mathit{xxx}}(1,t) = 0, \; t> 0,\label{BC1 ff}\\
  &w^{d}(x,0) = w_{t}^{d}(x,0)=0, \;x\in \left(0,1\right). \label{IV ff}
\end{align}
\end{subequations}
A weak solution of \eqref{indomain_form ff} can be defined in a similar way as that of \eqref{indomain_form} described in Definition~\ref{Def:weak_solution}. Note that it is shown later that the initial condition given in \eqref{IV} will be captured by the regulation error dynamics. Therefore, feedforward control design can be carried out based on System~\eqref{indomain_form ff} with zero-initial conditions.

Due to the fact that the model given in (\ref{indomain_form ff}) is driven by unbounded inputs, we will apply a sequence of cutoff functions $\varphi(x-x_j)$
%{\color{red}
having unit mass and satisfying $\int_{-\infty}^{+\infty}(x-x_j)\varphi(x-x_j)\text{d}x = 0$, also called \emph{blobs} \cite{Abramowitz1972},
%}
to approximate $\delta(x-x_{j})$ in the sense of distributions. We will then show that in steady state, $\overline{w}^d$ can be approximated by a sufficiently smooth function. Specifically, we consider first the following boundary controlled PDE:
%%%%%%%%%%%%%%%%%%% equations %%%%%%%%%%%%%%%%%%%%%%%%%%%
\begin{subequations}\label{target}
\begin{align}
  &u_{tt}(x,t)+ u_{\mathit{xxxx}}(x,t) = 0, \;\; x\in \left(0,1\right),\ t > 0,\label{target1} \\
  &u(0,t)=u_{x}(1,t)=u_{\mathit{xx}}(0,t)=0, \;\; t>0,\label{BC2}\\
  &u_{\mathit{xxx}}(1,t) = g(t), \;\; t>0, \label{BC3}\\
  &u(x,0) = u_{t}(x,0)= 0,\;\; x\in \left(0,1\right), \label{IV2}
\end{align}
\end{subequations}
%%%%%%%%%%%%%%%%%%% equations %%%%%%%%%%%%%%%%%%%%%%%%%%%
where $g(t)=\sum_{j=1}^Ng_{j}(t)$. Throughout this paper, without special statements, we assume that $g_{j}(t)\in C^{3}(\left[0,+\infty\right))$ and $g_j(0)=\dot{g}_j(0)=0$ for $j=1,2,...,N$. Notice that the motivation behind considering the system of the form given in \eqref{target} is that it allows employing the techniques of boundary control for feedforward control design while avoiding early truncations of dynamic model and/or controller structure.

A weak solution to \eqref{target} can be defined in a similar way as that of \eqref{indomain_form} described in Definition~\ref{Def:weak_solution}.
%%%%%%%%%%%%%%%%%%% definition %%%%%%%%%%%%%%%%%%%%%%%%%%%
\begin{mydef}
Let $T>0$. A weak solution to the problem~\eqref{target} is a function $u\in
C(\left[0,T\right]; \Phi)\cap C^1(\left[0,T\right]; L^2(0,1))$ satisfying
\begin{align}
  u(x,0) = u_{t}(x,0)=0,\ \ \ \ x\in \left(0,1\right),\notag
\end{align}
such that, for every $v\in C^{1}(\left[0,T\right];\Phi)$, one has for almost every $\tau\in \left[0,T\right]$
\begin{align}\label{eq: weak solution w2}
&\int_{0}^{1}u_{t}(x,\tau)v(x,\tau)\text{d}x-\int_{0}^{\tau}
\int_{0}^{1}u_{t}(x,t)v_{t}(x,t)\text{d}x\text{d}t\\
&+\int_{0}^{\tau}g(t)v(1,t)\text{d}t+\int_{0}^{\tau}\int_{0}^{1}
u_{\mathit{xx}}(x,t)v_{\mathit{xx}}(x,t)\text{d}x\text{d}t=0.\notag
\end{align}
\end{mydef}
%%%%%%%%%%%%%%%%%%% definition %%%%%%%%%%%%%%%%%%%%%%%%%%%
%Notice that the motivation behind considering \eqref{target} as a target system is that it allows leveraging the techniques of boundary control for feedforward control design.
Let $\psi(x,t)=u(x,t)-\sum_{j=1}^{N}g_{j}(t)H_{j}(x)$, where $H_{j}(x)$, $j=1,2,...,N$, are defined in $\left[0,1\right]$ satisfying
\begin{subequations}
\begin{align}
 &H_{\mathit{jxxxx}}(x)=\varphi(x-x_j), \label{eq: H}\\
 &H_{j}(0)=H_{jx}(1)=H_{\mathit{jxx}}(0)=0, H_{\mathit{jxxx}}(1) = 1, \label{Hj}
\end{align}
\end{subequations}
with $\varphi(x)\in L^2(\mathbb{R})$.
Then by lifting, \eqref{target} can be transformed into the following PDE with zero boundary conditions
%%%%%%%%%%%%%%%%%%% equations %%%%%%%%%%%%%%%%%%%%%%%%%%%
\begin{subequations}\label{new problem}
\begin{align}
  &\psi_{tt}(x,t)+ \psi_{\mathit{xxxx}}(x,t) \nonumber \\
= & -\sum_{j=1}^{N}\ddot{g}_{j}(t)H_j(x)-\sum_{j=1}^{N}g_j(t)H_{\mathit{jxxxx}}(x), \;  x\in \left(0,1\right), t > 0, \label{lifting a} \\
  &\psi(0,t)=\psi_{x}(1,t)=\psi_{\mathit{xx}}(0,t)=\psi_{\mathit{xxx}}(1,t) = 0,\\
  &\psi(x,0) =\psi_{t}(x,0)=0, \;\;x\in\left(0,1\right).
\end{align}
\end{subequations}
%%%%%%%%%%%%%%%%%%% equations %%%%%%%%%%%%%%%%%%%%%%%%%%%
%Note that System~\eqref{new problem} represents the dynamics of a beam with
%shear force control via rotational joints \cite{Luo:1999,Luo:1995}.
In order to relate \eqref{indomain_form ff} to
\eqref{target}, we first establish a relationship between \eqref{indomain_form ff} and
\eqref{new problem}, especially in steady state. Let $\alpha_j(t) = -g_j(t)$ and suppose that $\lim_{t\rightarrow \infty}\alpha_j(t)= \overline{\alpha}_j$ and $\lim_{t\rightarrow \infty}g_j(t)= \overline{g}_j$ for $j = 1,\ldots, N$. We have then in steady state:
\begin{subequations}\label{lifting 2}
\begin{align}
\overline{\psi}_{\mathit{xxxx}}(x) &= -\sum_{j=1}^{N}\overline{g}_j H_{\mathit{jxxxx}}(x) \nonumber\\
                           &=  \sum_{j=1}^{N} \overline{\alpha}_j\varphi(x-x_j),\label{lifting 2 a}\; x\in \left(0,1\right), t > 0, \\
\overline{\psi}(0)&=\overline{\psi}_{x}(1)=\overline{\psi}_{\mathit{xx}}(0)=\overline{\psi}_{\mathit{xxx}}(1) = 0.
\end{align}
\end{subequations}
For $j=1,2,...,N$, given a sequence of blobs $\{\varphi_{m}(x-x_j)\}$, we seek a sequence of functions $\{ H_{j}^m(x)\}$ such that
\begin{equation}\label{H varphi}
  H^{m}_{\mathit{jxxxx}}(x)=\varphi_{m}(x-x_j)
\end{equation}
with $H^m_{j}(x)$ satisfying \eqref{Hj} and $\varphi_{m}(x-x_j)\rightarrow \delta(x-x_{j})$ in $\mathscr{D}'(0,1)$ as $m \rightarrow +\infty$. Therefore, considering \eqref{lifting 2} and the steady-state model of
\eqref{indomain_form ff}, we have
$\overline{\psi}_{m}\rightarrow\overline{w}^d$ in $C^1(\left[0,1\right])$ as $m
\rightarrow +\infty$ (see Theorem~\ref{proposition 3}). Hence, for feedforward control design, we may consider the systems \eqref{new
problem} and \eqref{lifting 2}.
%%%%%%%%%%%%%%%%%%%% Proposition %%%%%%%%%%%%%%%%%%%%%%%%%
\begin{mylemma} \label{proposition 1} \cite{Abramowitz1972}
Let $\varphi_{m}(x)\in L^2(\mathbb{R})$ be defined by
\begin{equation}\label{Eq: sinc}
\varphi_{m}(x)=\frac{1}{\pi}\frac{\sin mx}{x}.
\end{equation}
Then $\varphi_{m}(x)$ has the following properties:
\begin{enumerate}[(i)]
  \item $\displaystyle\int_{-\infty}^{+\infty}\varphi_{m}(x)\text{d}x
         =\displaystyle\int_{-\infty}^{+\infty}\frac{1}{\pi}\frac{\sin mx}{x}\text{d}x=1$.
  \item $\varphi_{m}(x)\rightarrow \delta(x)$ in $\mathscr{D}'(\mathbb{R})$ as $m \rightarrow \infty$.
\end{enumerate}
\end{mylemma}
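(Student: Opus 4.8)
The plan is to establish the two stated properties of the sinc-type mollifier $\varphi_m(x) = \frac{1}{\pi}\frac{\sin mx}{x}$ separately, relying on classical results from Fourier analysis.

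For property (i), I would first reduce to the standard Dirichlet integral $\int_{-\infty}^{+\infty} \frac{\sin t}{t}\,\text{d}t = \pi$ via the substitution $t = mx$ (valid for $m>0$; the case $m<0$ follows by oddness of $\sin$, and the integrand is even in $m$ so the normalization holds for all $m\neq 0$). The Dirichlet integral itself can be proved in several standard ways: introducing the convergence factor $e^{-sx}$ and computing $\int_0^\infty e^{-sx}\frac{\sin x}{x}\,\text{d}x = \arctan(1/s)$ by differentiating under the integral sign, then letting $s\to 0^+$; or via contour integration; or by the Fourier/Feature identity $\int_0^\infty \frac{\sin(ax)}{x}\,\text{d}x = \frac{\pi}{2}\,\mathrm{sgn}(a)$. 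Since the lemma is cited from \cite{Abramowitz1972}, I would simply invoke this classical identity rather than re-derive it.

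For property (ii), the goal is to show $\varphi_m \to \delta$ in $\mathscr{D}'(\mathbb{R})$, i.e., for every test function $\phi \in \mathscr{D}(\mathbb{R})$,
\begin{equation}
\lim_{m\to\infty} \int_{-\infty}^{+\infty} \frac{1}{\pi}\frac{\sin mx}{x}\,\phi(x)\,\text{d}x = \phi(0). \notag
\end{equation}
Using property (i) to write $\phi(0) = \int \varphi_m(x)\phi(0)\,\text{d}x$, it suffices to show $\int \frac{\sin mx}{x}\,(\phi(x)-\phi(0))\,\text{d}x \to 0$. The natural approach is to split the integral into $|x|<\delta$ and $|x|\geq\delta$. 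On $|x|\geq\delta$, the function $(\phi(x)-\phi(0))/x$ is integrable (since $\phi$ has compact support), so the Riemann--Lebesgue lemma forces that contribution to vanish as $m\to\infty$. On $|x|<\delta$, writing $\phi(x)-\phi(0) = x\psi(x)$ with $\psi$ continuous (indeed smooth, by Taylor's theorem, since $\psi(x)=\int_0^1 \phi'(tx)\,\text{d}t$), the integrand becomes $\sin(mx)\,\psi(x)$, which is bounded by $\sup_{|x|<\delta}|\psi(x)|$, so the contribution is $O(\delta)$ uniformly in $m$. Choosing $\delta$ small first and then $m$ large completes the argument.

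The main obstacle — really the only nontrivial point — is the lack of \emph{absolute} integrability of $\varphi_m$: the integral in property (i) is only conditionally convergent (an improper Riemann integral, not a Lebesgue integral), so one must be careful that all manipulations respect this. This is why the decomposition $\phi(x)-\phi(0) = x\psi(x)$ near the origin is essential: it cancels the $1/x$ singularity and converts the delicate oscillatory-integral estimate into a routine one, while the compact support of $\phi$ handles the tail. Everything else is bookkeeping, and since the result is quoted verbatim from \cite{Abramowitz1972}, a brief proof or a direct citation suffices.
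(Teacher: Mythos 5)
The paper offers no proof of this lemma at all --- it is quoted as a classical fact with a citation to \cite{Abramowitz1972} --- so there is no in-paper argument to compare against. Your proposal supplies the standard argument, and most of it is fine: the reduction of (i) to the Dirichlet integral by the substitution $t=mx$, and the factorization $\phi(x)-\phi(0)=x\psi(x)$ with $\psi(x)=\int_0^1\phi'(tx)\,\mathrm{d}t$ near the origin, are exactly right. But one step fails as written.

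The problem is your treatment of the region $|x|\ge\delta$. You assert that $\bigl(\phi(x)-\phi(0)\bigr)/x$ is integrable there ``since $\phi$ has compact support'' and then invoke Riemann--Lebesgue. Only the piece $\phi(x)/x$ lies in $L^1(\{|x|\ge\delta\})$; the piece $\phi(0)/x$ does not (it decays like $1/x$ at infinity), so unless $\phi(0)=0$ the Riemann--Lebesgue lemma does not apply to the combination --- this is precisely the conditional-convergence issue you flag in your last paragraph, but it bites here rather than where you placed it. The repair is immediate: treat the two pieces separately. Apply Riemann--Lebesgue to $\int_{|x|\ge\delta}\frac{\sin mx}{x}\,\phi(x)\,\mathrm{d}x$, and handle $\phi(0)\int_{|x|\ge\delta}\frac{\sin mx}{x}\,\mathrm{d}x=2\phi(0)\int_{m\delta}^{\infty}\frac{\sin t}{t}\,\mathrm{d}t$, which tends to $0$ as $m\to\infty$ because the Dirichlet integral converges. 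Equivalently, split the original integral into three parts: $\frac{1}{\pi}\int_{|x|<\delta}\sin(mx)\,\psi(x)\,\mathrm{d}x$, $\frac{\phi(0)}{\pi}\int_{|x|<\delta}\frac{\sin mx}{x}\,\mathrm{d}x$, and $\frac{1}{\pi}\int_{|x|\ge\delta}\frac{\sin mx}{x}\,\phi(x)\,\mathrm{d}x$; for any fixed $\delta$ the first and third vanish by Riemann--Lebesgue and the second tends to $\phi(0)$ since $\frac{2}{\pi}\int_0^{m\delta}\frac{\sin t}{t}\,\mathrm{d}t\to 1$, so the $\delta\to 0$ limit is not even needed. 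With that correction your argument is complete.
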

By taking $\varphi_{m}$ given in \eqref{Eq: sinc} as the input to \eqref{H varphi}, we get
\begin{align}\label{Eq: Hjm}
  H_{j}^{m}(x)=&\frac{1}{6}x^3-\frac{1}{2}x \nonumber \\
               &+\int_{0}^{x}\int_{1}^{z}\int_{0}^{y}\int_{1}^{t}
                \varphi_{m}(s-x_{j})\text{d}s\text{d}t\text{d}y\text{d}z.\notag
\end{align}
%%%%%%%%%%%%%%%%%%%% Proposition %%%%%%%%%%%%%%%%%%%%%%%%%
%Details of the proof of Proposition \ref{proposition 1} is given in the Appendix.
\begin{mytheorem}\label{proposition 3}
Let $\varphi_{m}(x)$ be defined as in Lemma~\ref{proposition
1}. Assume that $\alpha_j(t)=-g_j(t)$ tends to $\overline{\alpha}_j = -\overline{g}_j $ as $t\rightarrow \infty$ for all $j=1,2,...,N$.
Denote by $\overline{\psi}_{j}^{m}$ and by $\overline{w}^{d}_j $ the
steady-state solutions of System~\eqref{indomain_form ff} and System~\eqref{new
problem}, respectively. Then
$\overline{\psi}_{j}^{m}\rightarrow\overline{w}^{d}_j $ in
$C^1(\left[0,1\right])$ as $m\rightarrow +\infty$ for $j=1,2,...,N$.
\end{mytheorem}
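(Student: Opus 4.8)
The plan is to work entirely in steady state and compare the two boundary-value problems obtained by passing to the limit $t\to\infty$ in System~\eqref{indomain_form ff} and System~\eqref{new problem} (equivalently \eqref{lifting 2}). First I would write down explicitly the steady-state ODE for $\overline{w}^{d}_{j}$, namely $\overline{w}^{d}_{jxxxx}(x)=\overline{\alpha}_j\,\delta(x-x_j)$ together with the boundary conditions $\overline{w}^{d}_j(0)=\overline{w}^{d}_{jx}(1)=\overline{w}^{d}_{jxx}(0)=\overline{w}^{d}_{jxxx}(1)=0$, and solve it by four successive integrations, using the jump condition imposed by the Dirac mass at $x_j$; this yields a closed-form expression for $\overline{w}^{d}_j$ that is $C^2$ but has a corner in its third derivative at $x_j$. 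In parallel, from \eqref{lifting 2 a} the steady state $\overline{\psi}_{j}^{m}$ solves $\overline{\psi}_{jxxxx}^{m}(x)=\overline{\alpha}_j\,\varphi_{m}(x-x_j)$ with the same boundary conditions, so by the same four integrations $\overline{\psi}_{j}^{m}(x)=\overline{\alpha}_j\int_{0}^{x}\!\int_{1}^{z}\!\int_{0}^{y}\!\int_{1}^{t}\varphi_{m}(s-x_j)\,\mathrm{d}s\,\mathrm{d}t\,\mathrm{d}y\,\mathrm{d}z$ (the polynomial terms coming from the boundary data being identical in both problems and hence cancelling in the difference).

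Next I would form the difference $e_m(x):=\overline{\psi}_{j}^{m}(x)-\overline{w}^{d}_j(x)$. By construction $e_m$ satisfies $e_{mxxxx}(x)=\overline{\alpha}_j\big(\varphi_m(x-x_j)-\delta(x-x_j)\big)$ in $\mathscr{D}'(0,1)$ with homogeneous boundary conditions $e_m(0)=e_{mx}(1)=e_{mxx}(0)=e_{mxxx}(1)=0$. The key point is that the fourth-order solution operator for these boundary conditions has a Green's kernel $G(x,\xi)$ that, together with $\partial_x G(x,\xi)$, is bounded and (jointly) continuous on $[0,1]\times[0,1]$; hence $e_m(x)=\overline{\alpha}_j\int_0^1 G(x,\xi)\big(\varphi_m(\xi-x_j)-\delta(\xi-x_j)\big)\mathrm{d}\xi$ and $e_{mx}(x)=\overline{\alpha}_j\int_0^1 \partial_x G(x,\xi)\big(\varphi_m(\xi-x_j)-\delta(\xi-x_j)\big)\mathrm{d}\xi$. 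Now I invoke Lemma~\ref{proposition 1}: since $\varphi_m(\cdot-x_j)\to\delta(\cdot-x_j)$ in $\mathscr{D}'(0,1)$ and $G(x,\cdot),\ \partial_x G(x,\cdot)$ are smooth test-type functions on a neighbourhood of $x_j\in(0,1)$, the pairing converges pointwise in $x$; with a uniform bound (from the sinc properties in Lemma~\ref{proposition 1}(i), which control $\int|\varphi_m|$ on bounded intervals after splitting near the singularity, or more simply by a uniform-in-$m$ bound on the convolution of $\varphi_m$ with the continuous function $\xi\mapsto G(x,\xi)$) one upgrades this to uniform convergence in $x$, giving $\|e_m\|_{C^1[0,1]}\to0$, which is exactly the claim.

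The step I expect to be the main obstacle is establishing the \emph{uniform} (in $x$) convergence of $e_m$ and $e_{mx}$ to zero, rather than mere pointwise convergence. The difficulty is that the sinc kernel $\varphi_m(x)=\frac{1}{\pi}\frac{\sin mx}{x}$ is not nonnegative and its $L^1$ norm over all of $\mathbb{R}$ grows like $\log m$, so one cannot bound $\int_0^1|\varphi_m(\xi-x_j)|\,\mathrm{d}\xi$ by a constant in the crudest way; one must instead use oscillatory-integral estimates (integration by parts in $\xi$, exploiting the smoothness of $\xi\mapsto G(x,\xi)$ away from and across $x_j$, or a Riemann–Lebesgue-type argument) to show $\int_0^1 G(x,\xi)\varphi_m(\xi-x_j)\,\mathrm{d}\xi\to G(x,x_j)$ uniformly in $x$, and similarly for $\partial_x G$. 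Here the equioscillation/equicontinuity of the family $\{x\mapsto \int_0^1 G(x,\xi)\varphi_m(\xi-x_j)\mathrm{d}\xi\}_m$ — which follows from the joint continuity of $G$ and the uniform-on-compacta bounds on partial sums of the sine integral from Lemma~\ref{proposition 1} — lets one conclude by a Dini/Arzelà–Ascoli argument. The remaining verifications (explicit construction and continuity of $G$ and $\partial_x G$, cancellation of the boundary-data polynomials, and the elementary fact that $\alpha_j=-g_j$ makes the two right-hand sides consistent) are routine.
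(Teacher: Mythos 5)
Your proposal is correct in outline but takes a genuinely different route from the paper. The paper never solves the two steady‑state problems explicitly: it subtracts the two fourth‑order equations, pairs the difference with a test function $v\in\mathscr{D}(0,1)$ satisfying $v(0)=v_x(1)=v_{xx}(0)=0$, integrates by parts to get $\int_0^1\left(\overline{\psi}^m_{jx}-\overline{w}^d_{jx}\right)v_{xxx}\,\mathrm{d}x=\overline{\alpha}_j\int_0^1\left(\varphi_m(x-x_j)-\delta(x-x_j)\right)v\,\mathrm{d}x$, concludes $\overline{\psi}^m_{jx}\to\overline{w}^d_{jx}$ in $\mathscr{D}'(0,1)$, and then upgrades this to a.e.\ convergence (citing Lemma~3.31 of Adams--Fournier), to pointwise convergence by continuity, and finally to convergence in $C^1([0,1])$. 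Your route --- explicit quadruple-integral/Green's-kernel representation of both steady states, so that $e_m$ and $e_{mx}$ become pairings of $\varphi_m-\delta$ with the kernels $G(x,\cdot)$ and $\partial_xG(x,\cdot)$, which are uniformly Lipschitz in $\xi$ --- is more constructive, and it isolates the genuine difficulty more honestly than the paper does: the step you flag (uniform-in-$x$ convergence of the oscillatory integrals, given that $\|\varphi_m\|_{L^1}$ grows like $\log m$ so no approximate-identity bound is available) is exactly the step the paper glosses over, since distributional convergence of $L^1$ functions does not in general imply a.e.\ convergence (e.g.\ $\sin(mx)\to 0$ in $\mathscr{D}'$ but not a.e.), and pointwise convergence of continuous functions does not imply uniform convergence. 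Two cautions on your side: (a) $G(x,\cdot)$ is only $C^2$ and not compactly supported, so you cannot literally invoke $\varphi_m\to\delta$ in $\mathscr{D}'(0,1)$ against it --- you must, as you indicate, run a Dirichlet--Jordan/second-mean-value argument on the uniformly Lipschitz (hence uniformly BV) family $\xi\mapsto G(x,\xi)$, $\xi\mapsto\partial_xG(x,\xi)$, and once that is written out your proof is actually tighter than the published one; (b) the integration against $\varphi_m(\cdot-x_j)$ must be taken over $(0,1)$ only (the kernel is not supported there), which introduces boundary terms that vanish only because $\varphi_m$ oscillates --- these should be estimated explicitly rather than absorbed into ``the sinc properties.''
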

%%%%%%%%%%%%%%%% proposition 3 %%%%%%%%%%%%%%%%%%%%%%%%%%%%%%
\begin{proof}
In the steady state, we have
\begin{subequations}
\begin{align}
&\overline{\psi}^{m}_{\mathit{jxxxx}}(x) = -\overline{g}_jH^{m}_{\mathit{jxxxx}}(x)=
\overline{\alpha}_j\varphi_{m}(x-x_j),
    \\
  &\overline{\psi}_{j}^{m}(0)=\overline{\psi}_{jx}^{m}(1)
  =\overline{\psi}^{m}_{\mathit{jxx}}(0)=\overline{\psi}^{m}_{\mathit{jxxx}}(1) = 0,
\end{align}
\end{subequations}
and
\begin{subequations}\label{steady beam equation}
\begin{align}
&\overline{w}^{d}_{\mathit{jxxxx}}(x) =  \overline{\alpha}_j\delta(x-x_{j}),
    \\
  &\overline{w}^{d}_j(0)=\overline{w}^{d}_{jx}(1)
  =\overline{w}^{d}_{\mathit{jxx}}(0)=\overline{w}^{d}_{\mathit{jxxx}}(1) = 0.
  \end{align}
\end{subequations}
Taking $v(x)\in \mathscr{D}(0,1)$, with $v(0)=v_x(1)=v_{\mathit{xx}}(0)=0$, as a test function and integrating by parts, we get
\begin{align*}
&\int_{0}^{1}\left(\overline{\psi}^m_{jx}(x)-\overline{w}^{d}_{jx}(x)\right)v_{\mathit{xxx}}(x)\text{d}x \\
=&\overline{\alpha}_j\int_{0}^{1}\left(\varphi_{m}(x-x_j)-\delta(x-x_{j})\right)v(x)\text{d}x.
\end{align*}
Since $\varphi_{m}(x-x_j)\rightarrow\delta(x-x_{j})$ in the sense of
distributions as $m\rightarrow +\infty$ and $v_{\mathit{xxx}} \in
\mathscr{D}(0,1)$, it follows that
$\overline{\psi}^m_{jx}\rightarrow\overline{w}^{d}_{jx}$ in the
sense of distributions as $m\rightarrow +\infty$ for $j=1,2,...,N$.
Furthermore, as $\overline{\psi}^m_{jx}\in L^1(0,1)$ and
$\overline{w}^{d}_{jx} \in L^1(0,1)$, we have
$\overline{\psi}^m_{jx}\rightarrow\overline{w}^{d}_{jx} $ a.e. in
$(0,1)$ (see Lemma 3.31 of \cite{AF2003}, page 74). Then by
the continuity of $\overline{\psi}^m_{jx}$ and
$\overline{w}^{d}_{jx}$, we conclude that
$\overline{\psi}^m_{jx}\rightarrow\overline{w}^{d}_{jx}$ pointwise
in $(0,1)$. Therefore
$\overline{\psi}^m_{j}\rightarrow\overline{w}^{d}_j $ in
$C^1(\left[0,1\right])$ as $m\rightarrow +\infty$ for $j=1,2,...,N$.
\end{proof}

%{\color{red}
\begin{remark}\label{RM: th2}
Note that describing the control acting on discrete points in the domain by Dirac delta functions is a mathematical abstraction, which allows for the beam equation to possess some fundamental properties, in particular the exact controllability (see, e.g., \cite{jacob1999equivalent}), and facilitates control design. However, the real physical devices cannot generate unbounded control concentrated on a point. From this viewpoint, Theorem~\ref{proposition 3} provides an assessment of the gap between the performance achievable by utilizing real physical devices and that predicted by the corresponding idealized mathematical model.
\end{remark}
%}

%%%%%%%%%%%%%%%%%%%%%%%%%%%%%%%%%%%%%%%%%%%%%%%%%%%%%%%%%%%%%%%%%%%%%%%%%%%%%%%%%%%%%%%%%%%%%%%%%%%%%%%%%%%%%%%%%%%%%%%%%%%%%%%%%%%%%%%%%%%%%%%%%
\subsection{Well-posedness of Cauchy Problems}\label{Sec: Well-posedness}
Well-posedness analysis is essential to the approach
developed in this work. In this subsection, we establish the existence and the uniqueness of weak
solutions of equations \eqref{indomain_form}, \eqref{indomain_form ff}, \eqref{target} and \eqref{new problem}.
%%%%%%%%%%%%%%%% MAIN THEOREM 2 %%%%%%%%%%%%%%%%%%%%%%%%%%%

\begin{mytheorem}\label{the: wellposedness}
The following statements hold true:
\begin{enumerate}[(i)]
  \item Assume $\alpha_{j}\in L^{2}(0,T)$ for $j=1,2,...,N+1$. Let $h_{0}\in \Phi,\ h_{1} \in L^{2}(0,1)$ and $T>0$. Then System \eqref{indomain_form} and System \eqref{indomain_form ff} has a unique weak solution $w\in
C(\left[0,T\right]; \Phi)\cap C^1(\left[0,T\right]; L^2(0,1))$ and $w^d\in
C(\left[0,T\right]; \Phi)\cap C^1(\left[0,T\right]; L^2(0,1))$, respectively.
  \item Let $T>0$ and $g\in C^{2} (\left[0,T\right])$. Then System \eqref{target} has a unique weak solution $u\in
C(\left[0,T\right]; \Phi)\cap C^1(\left[0,T\right]; L^2(0,1))$. Furthermore, if $g\in
C^{3} (\left[0,T\right])$ and $H_{j}^{m}(x)$, $j=1,2,...,N$, is defined as in
\eqref{Hj}, then System \eqref{new problem} has a unique solution
$\psi_{m} \in C(\left[0,T\right]; \Phi)\cap C^1(\left[0,T\right]; L^2(0,1))$.
\end{enumerate}
\end{mytheorem}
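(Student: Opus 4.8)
The plan is to recognise all four systems as the abstract second-order Cauchy problem $\ddot w+Aw=F$ on $L^2(0,1)$, where $A$ is the positive self-adjoint realisation of $\partial_x^4$ with domain $D(A)=\{\phi\in H^4(0,1):\phi(0)=\phi_{xx}(0)=\phi_x(1)=\phi_{xxx}(1)=0\}$ and form domain $D(A^{1/2})=\Phi$. The first step is the (routine) spectral analysis of $A$: a Sturm--Liouville computation gives the explicit orthonormal eigenbasis $\phi_n(x)=\sqrt2\,\sin(\mu_n x)$, $\mu_n=(n-\tfrac12)\pi$, with $A\phi_n=\lambda_n\phi_n$, $\lambda_n=\mu_n^4$; writing $\omega_n:=\sqrt{\lambda_n}=\mu_n^2$ we record the spectral gap $\omega_{n+1}-\omega_n=2\pi^2 n\to+\infty$ and the uniform bound $|\phi_n(x_j)|\le\sqrt2$, and we note that $\{\phi_n\}$ is also an orthogonal basis of $\Phi$, so $\|v\|_\Phi^2\simeq\sum_n\lambda_n\langle v,\phi_n\rangle^2$. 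Since each $\phi_n$ satisfies all four boundary conditions, expanding $w(\cdot,t)=\sum_n w_n(t)\phi_n$ and the test functions in this basis turns, after integration by parts, the spatial part of every weak identity into $\sum_n\lambda_n w_n\langle v,\phi_n\rangle$; hence $w$ is a weak solution precisely when each coefficient solves $\ddot w_n+\omega_n^2 w_n=\langle F(t),\phi_n\rangle$ with the prescribed initial data, and \emph{uniqueness in all four cases follows at once}. Equivalently, with $z=(w,w_t)$ and $\mathcal A(\phi,\psi)=(\psi,-A\phi)$ on $\mathcal H:=\Phi\times L^2(0,1)$, two integrations by parts show $\mathcal A$ is skew-symmetric and $\mathcal A\pm I$ is surjective, so by Stone's theorem $\mathcal A$ generates a unitary $C_0$-group $(S(t))_{t\in\mathbb R}$ on $\mathcal H$ and a weak solution is exactly the mild solution $z(t)=S(t)z(0)+\int_0^tS(t-s)\,(0,F(s))\,\text{d}s$.

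Part (ii) then follows directly from this abstract picture. By \eqref{eq: H} the forcing of \eqref{new problem} is $\bigl(0,-\sum_j\ddot g_j(t)H_j^m-\sum_j g_j(t)\varphi_m(\cdot-x_j)\bigr)$, which lies in $C([0,T];\mathcal H)\subset L^1(0,T;\mathcal H)$ whenever $g_j\in C^2$, since $H_j^m\in H^4(0,1)$ and $\varphi_m(\cdot-x_j)\in L^2(0,1)$; the Duhamel formula thus furnishes the unique solution $\psi_m\in C([0,T];\mathcal H)=C([0,T];\Phi)\cap C^1([0,T];L^2)$, and when $g_j\in C^3$ the forcing is $C^1$ in time so that, the initial datum $0$ lying in $D(\mathcal A)$, this mild solution is in fact classical. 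For \eqref{target} I would use the lifting $u\mapsto\psi:=u-\sum_j g_j H_j^m$: by \eqref{eq: H}--\eqref{Hj} (the condition $H^m_{jxxx}(1)=1$ reproducing the boundary term $g(t)v(1,t)$ in the weak identity) and because $\sum_j g_jH_j^m\in C^1([0,T];\Phi)$ when $g_j\in C^1$, this is an affine bijection between weak solutions of \eqref{target} and weak solutions of \eqref{new problem}, so existence and uniqueness for \eqref{target} with $g\in C^2$ follow from the above --- the role of the lifting being precisely to convert the inhomogeneous boundary condition into a merely continuous interior forcing.

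Part (i) is the substantive case, because $\delta(\cdot-x_j)$ is \emph{not} bounded into $L^2(0,1)$: it belongs only to $\Phi'$ (in fact to $D(A^{-s})$ only for $s>\tfrac14$), so $F(t)=\sum_j\alpha_j(t)\delta(\cdot-x_j)\in L^2(0,T;\Phi')$ and the classical energy estimate for $\ddot w+Aw=f$ with $f\in L^2(0,T;L^2(0,1))$ is unavailable --- applied naively it loses more than a quarter derivative in the $A$-scale and misses the asserted regularity $C([0,T];\Phi)\cap C^1([0,T];L^2)$. The core of the proof is therefore the \emph{admissibility estimate}
\begin{equation*}
\sup_{t\in[0,T]}\Bigl(\|w(\cdot,t)\|_\Phi^2+\|w_t(\cdot,t)\|_{L^2}^2\Bigr)\le C_T\Bigl(\|h_0\|_\Phi^2+\|h_1\|_{L^2}^2+\sum_{j=1}^{N+1}\|\alpha_j\|_{L^2(0,T)}^2\Bigr),
\end{equation*}
for the function $w(\cdot,t)=\sum_n w_n(t)\phi_n$ assembled from the explicit solutions
\begin{equation*}
w_n(t)=\langle h_0,\phi_n\rangle\cos(\omega_n t)+\frac{\langle h_1,\phi_n\rangle}{\omega_n}\sin(\omega_n t)+\frac{1}{\omega_n}\int_0^t\sin\!\bigl(\omega_n(t-s)\bigr)\sum_{j=1}^{N+1}\alpha_j(s)\,\phi_n(x_j)\,\text{d}s
\end{equation*}
of the decoupled ODEs. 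The data terms contribute at most $2(\|h_0\|_\Phi^2+\|h_1\|_{L^2}^2)$; for the Duhamel term one uses $\lambda_n/\omega_n^2=1$ and $|\phi_n(x_j)|\le\sqrt2$ to reduce the required bound to $\sum_n\bigl|\int_0^t e^{-i\omega_n s}\alpha_j(s)\,\text{d}s\bigr|^2\le B_T\|\alpha_j\|_{L^2(0,T)}^2$, i.e.\ to the fact that $\{e^{i\omega_n t}\}_{n\ge1}$ is a Bessel sequence in $L^2(0,T)$ --- which holds for \emph{every} $T>0$ because the gaps $\omega_{n+1}-\omega_n=2\pi^2 n$ grow without bound, a Schur test on the Gram matrix $\bigl(\int_0^T e^{i(\omega_n-\omega_m)t}\,\text{d}t\bigr)_{n,m}$ giving $B_T\le T+\tfrac23$. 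The companion estimate with $\cos$ in place of $\sin$ controls $\|w_t(\cdot,t)\|_{L^2}$, so altogether $w\in L^\infty(0,T;\Phi)\cap W^{1,\infty}(0,T;L^2)$ with the corresponding (linear) solution map bounded into this space.

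The remaining step is to upgrade to strong continuity in time. For data in the dense class $h_0\in D(A)$, $h_1\in\Phi$, $\alpha_j\in C^2([0,T])$ with $\alpha_j(0)=0$, the function $w_t$ solves a problem of the same type --- forcing $\sum_j\dot\alpha_j(t)\delta(\cdot-x_j)$ and initial pair $(h_1,\,-\partial_x^4 h_0)\in\Phi\times L^2(0,1)$ --- so the admissibility estimate applied to $w_t$ gives $w\in W^{1,\infty}(0,T;\Phi)\cap W^{2,\infty}(0,T;L^2)\hookrightarrow C([0,T];\Phi)\cap C^1([0,T];L^2)$; the general case then follows by approximating the data in $\Phi\times L^2(0,1)\times L^2(0,T)^{N+1}$ and passing to the uniform limit of these $\mathcal H$-valued continuous functions. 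System \eqref{indomain_form ff} is the special case obtained by removing the $(N+1)$-th actuator and taking zero initial data. The single genuine obstacle --- the one I would concentrate on --- is the admissibility estimate of part (i): it rests entirely on the super-linear growth $\omega_n\sim n^2$ of the beam eigenfrequencies (equivalently, on pointwise actuation being an admissible control operator for the Euler--Bernoulli beam), without which the asserted energy regularity would in fact be false.
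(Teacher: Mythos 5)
Your proposal is correct in substance but follows a genuinely different, and much more self-contained, route than the paper. For part (i) the paper offers no argument at all beyond the citation ``proceed step by step as in Proposition 3.1 of Ammari--Tucsnak''; what you write out --- the explicit eigenbasis $\sqrt2\sin((n-\tfrac12)\pi x)$ (which is indeed the correct spectral decomposition for these boundary conditions), the reduction to decoupled oscillators, and the admissibility estimate for the Dirac input via the Bessel-sequence property of $\{e^{i\omega_n t}\}$ with $\omega_n=((n-\tfrac12)\pi)^2$ and gap $2\pi^2 n$ --- is essentially the content of that cited proposition, so you have unpacked the black box rather than invoked it, and you correctly identify the admissibility inequality as the only nontrivial step. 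For part (ii) the paper also lifts to a homogeneous boundary-value problem, but with the single cubic $\tfrac12x-\tfrac16x^3$ applied to the aggregate input $g$, then quotes Stone's theorem and Pazy's perturbation theorem to get a classical solution of the lifted system before passing to a limit in $C^1$; your version, lifting by $\sum_j g_jH_j^m$ and writing Duhamel's formula for the same skew-adjoint generator, is equivalent. What your approach buys is a quantitative, uniform energy estimate (hence boundedness of the data-to-solution map) without external references; what it costs is that it is tied to the constant-coefficient, explicitly diagonalizable case, whereas the abstract semigroup/admissibility framework the paper leans on extends to variable coefficients and beam networks.

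Two small points to tighten. First, the paper's Definition~2 tests the weak identity only against $v\in C^1([0,T];\mathscr{D}(0,1))$, and the eigenfunctions $\phi_n$ do not belong to $\mathscr{D}(0,1)$, nor is $\mathscr{D}(0,1)$ dense in $\Phi$ for the $H^2$ norm (its $H^2$-closure is $H^2_0$, whereas elements of $\Phi$ need not vanish at $x=1$); so ``uniqueness follows at once'' requires either enlarging the test class to $C^1([0,T];\Phi)$, as the paper itself does for System~\eqref{target}, or adopting the transposition definition \eqref{Eq: weak_ansewr1}. This is arguably a defect of the paper's definition rather than of your argument, but it must be addressed before the coefficient reduction is legitimate for uniqueness. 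Second, the parenthetical claim that $\delta\in D(A^{-s})$ only for $s>\tfrac14$ depends on the normalization of the fractional scale: with $\|u\|_{D(A^s)}=\|A^su\|_{L^2}$ and $\lambda_n\sim(n\pi)^4$ one gets $\sum_n\lambda_n^{-2s}|\phi_n(x_j)|^2<\infty$ iff $s>\tfrac18$; nothing downstream depends on this.
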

\begin{proof}
The proof of (i) can be proceeded step by step as in Proposition~3.1 of \cite{ammari:2000}. We prove the first result of (ii) and the second part can be proceeded in the same way. We assume first $g(t)\in C^3(\left[0,T\right])$ and consider the following system:
\begin{subequations}\label{10abc}
\begin{align}
 &\nu_{tt}(x,t)+ \nu_{\mathit{xxxx}}(x,t) =\left(\dfrac{1}{2}x-\dfrac{1}{6}x^{3}\right)\ddot{g}(t),
 \; x\in\Omega, t > 0,\\
 &\nu(0,t)=\nu_{x}(1,t)=\nu_{\mathit{xx}}(0,t)=\nu_{\mathit{xxx}}(1,t) = 0,\\
 &\nu(x,0) = 0, \nu_{t}(x,0)=0, \;\;x\in\Omega.
\end{align}
\end{subequations}
Let $X=\Phi\times L^{2}(0,1)$ and $H^4_{(0)}(0,1)=\{u\in H^4 (0,1);
u(0)=u_{x}(1)=u_{\mathit{xx}}(0)=u_{\mathit{xxx}}(1)=0\}$. Define the inner product on $X$
by $\langle(u_{1},v_{1}),(u_{2},v_{2})\rangle_{X}=\int_{0}^{1}(
u_{\mathit{1xx}}u_{\mathit{2xx}}+v_{1}v_{2})\text{d}x$. Define the subspace $\mathcal
{D}(A_0)\subset X$ by
 $\mathcal {D}(A_0)=\{(u,v); \ (u,v) \in H^{4}_{(0)}(0,1)\times \Phi\}$,
with the corresponding operator $A_{0}$:~$\mathcal {D}(A_0)\rightarrow X$
defined as:
\begin{equation*}
A_0 \binom{u}{v} = \binom{v}{-u_{\mathit{xxxx}}}.
% \begin{pmatrix}
%    u \\
%    v \\(
%  \end{pmatrix}
%= \begin{pmatrix}
%    v \\
%    - u_{xxxx} \\
%  \end{pmatrix}.
\end{equation*}
One may easily check that
$\mathcal {D}(A_0)$ is dense in $X$, $A_0$ is closed, $A^{*}_{0}=-A_0$, and $\langle A_0z,z\rangle_{X}=0$.
Thus, by Stone's theorem, $A_0$ generates a semigroup of isometries on $X$. Based on a classical result on perturbations
of linear evolution equations (see, e.g., Theorem~1.5, Chapter~6, page~187, \cite{pazy1983semigroups}) there exists uniquely
$%\begin{equation*}
z=(z_1,z_2)\in C^{1}(\left[0,T\right]; X)\cap C(\left[0,T\right];\mathcal
{D}(A_0)),
$ %\end{equation*}
such that
\begin{equation*}\label{z}
\frac{\text{d}z}{\text{d}t}=A_0z+
\begin{pmatrix}
0\\
\left(\dfrac{1}{2}x-\dfrac{1}{6}x^{3}\right)\ddot{g}(t)
\end{pmatrix},
\ z(\cdot, 0)=(0,\ 0),
\end{equation*}
which implies that \eqref{10abc} has a unique solution $\nu=z_1 \in C(\left[0,T\right]; H^4_{(0)}(0,1))\cap C^1(\left[0,T\right]; \Phi) $ in the usual sense. Particularly, $\nu \in C(\left[0,T\right]; \Phi)\cap C^1(\left[0,T\right]; L^2(0,1)) $ is a weak solution. A
direct computation shows that $u=\nu-
\left(\frac{1}{2}x-\frac{1}{6}x^{3}\right)g(t) $ is a solution
of \eqref{target} in the usual sense and, in particular, it is a weak solution.

Now for $g\in C^{1}(\left[0,T\right])$, let $g_{n}\in C^{3}(\left[0,T\right])$
such that $g_{n}\rightarrow g \ \text{in}\ C^{1}(\left[0,T\right])$. Consider $ u_n=\nu_n-
\left(\frac{1}{2}x-\frac{1}{6}x^{3}\right)g_n(t)$, where $\nu_{n}$ is the solution of \eqref{10abc} corresponding to the data $g_{n}(t)$. Then
arguing as above and taking limit as in Chapter~2 of \cite{coron2007}, we can obtain the existence
and the uniqueness of a weak solution of \eqref{target}.
\end{proof}

%%%%%%%%%%%%%%%%%%%%%%%%%%%%%%%%%%%%%%%%%%%%%%%%%%%%%%%%%%%%%
\section{Feedback Control and Stability of the Inhomogeneous System}\label{Sec: Stability}
The validity of the proposed scheme requires a suitable closed-loop control that can guarantee the stability of the original inhomogeneous system.
As the $(N+1)^{\text{th}}$ actuator is dedicated to stabilizing control, \eqref{indomain_form1} can be written as
\begin{align}\label{indomain_form N}
   &w_{tt}(x,t)+ w_{\mathit{xxxx}}(x,t) - \alpha_{N+1}(t)\delta(x-x_{N+1}) \nonumber \\
  =& \sum_{j=1}^{N} \alpha_j(t)\delta(x-x_j),
   \; x\in (0,1), t > 0.
\end{align}
Suppose further that the feedback control is taken as \cite{ammari:2000,Delfour:1987}
\begin{equation}\label{Eq: feedback}
  \alpha_{N+1}(t) = -kw_t\left(x_{N+1},t\right),
\end{equation}
where $k$ is a positive-valued constant. Then in closed-loop,
\eqref{indomain_form N} becomes
\begin{align}\label{indomain_form N CL}
    &w_{tt}(x,t) + w_{\mathit{xxxx}}(x,t) + k w_t\left(x,t\right)\delta(x-x_{N+1}) \nonumber\\
  = &\sum_{j=1}^{N} \alpha_j(t)\delta(x-x_j),
   \; x\in (0,1), t > 0.
\end{align}
Let $\mathcal{D}(A)=\{(w,v); (w,v)\in (H^{2}(0,1)\cap (H^{4}(0,x_{1})\cup H^{4}(x_{1},x_{2})\cup\cdots \cup H^{4}(x_{N},x_{N+1})\cup H^{4}(x_{N+1},1))) \times
H^{2}(0,1), w(0)=w_{x}(1)=w_{\mathit{xx}}(0)=w_{\mathit{xxx}}(1)=0,
v(0)=v_{x}(1)=0\}$ and $ X$ be defined as in the proof of Theorem~\ref{the: wellposedness}.
Let $T>0$ and $\alpha_{j}\in L^{2}(0,T)$ for $j=1,...,N$. Assume $ ( h_{0},
h_{1})\in X$. To address the stability of System
\eqref{indomain_form N CL} with the boundary conditions \eqref{BC1}
and initial conditions \eqref{IV}, we consider the corresponding
linear control system under the following abstract form:
\begin{subequations}\label{eq: indomain ABS}
\begin{align}
  &\dot{z}= Az +B\alpha,\;\; t > 0,\label{eq: indomain ABS dyn} \\
     &z(0)= z^0=( h_{0},
h_{1})^T, \label{eq: indomain ABS IC}
\end{align}
\end{subequations}
where $z = (w,\; v)^T$, $\alpha =(\alpha_1, \; \cdots, \;\alpha_N )^T $, $A$:~$\mathcal{D}(A) \rightarrow X$ is defined as:
\begin{align}\label{Eq: op A}
  A\binom {w}{v} = \binom {v}{- w_{\mathit{xxxx}} -k v\delta(x-x_{N+1})},
%  \begin{pmatrix}
%     w \\
%     v \\
%   \end{pmatrix}
%   = \begin{pmatrix}
%     v \\
%     - w_{xxxx} -k v\delta(x-x_{N+1})\\
%   \end{pmatrix},
\end{align}
with $v=w_t$ and $B$:~$\mathbb{R}^{N}\rightarrow \mathcal{D}'(A^*)$, where $A^*$ is the adjoint of $A$,
which is defined as:
\begin{equation}\label{Eq: op B}
    (B\alpha)y= \binom {0}{\sum_{j=1}^{N}\alpha_j\delta(x-x_j)}^Ty,
%    \begin{pmatrix}
%     0 \\
%     \displaystyle\sum_{i=1}^{N}\alpha_i\delta(x-x_i) \\
%     \end{pmatrix}^Ty,
      \;\;\forall\  y\in \mathcal{D}(A^*).
\end{equation}
%It can be directly verified that
%$A^*=A$, and they are dissipative. Thus $A$ generates a strongly
%continuous semigroup $S(t)$ on $X$. The semigroup $S^*(t)$,
%generated by $A^*$, equals to $S(t)$.
Note that $B^*$:~$\mathcal{D}(A^*)\rightarrow\mathbb{R}^{N}$ is defined by
\begin{equation}\label{Eq: op B*}
   B^*y=(y_2(x_{1}),\ldots,y_2(x_{N}))^{T}, \forall y=\binom {y_1}{y_2}\in \mathcal{D}(A^*).
\end{equation}
%and $y(\cdot,t)=S^*(t)y^{\tau},\tau\in [0,T]$.
Let
$U=\mathbb{R}^{N}$. Then the solution of the Cauchy problem
\eqref{eq: indomain ABS} can be defined as follows (see
\cite{coron2007}, Definition 2.36, p53):
\begin{equation}\label{Eq: weak_ansewr1}
\begin{split}
     &\langle z(\tau),y^{\tau}\rangle_X \\
    =&\langle  z^0,S^*(\tau)y^{\tau}\rangle_X
      + \int_0^{\tau} \!\!\langle  \alpha(t),B^*S^*(\tau-t)y^{\tau}\rangle_{U}\text{d}t,\\
     &\forall\ \tau \in \left[0,T\right], \forall\  y^{\tau} \in X.
\end{split}
\end{equation}
One may verify, as in Chapter~2 of \cite{coron2007}, that the
solution defined by \eqref{Eq: weak_ansewr1} is also a weak solution
of \eqref{indomain_form N CL} under the form given in
Definition~\ref{Def:weak_solution}.

%%%%%%%%%%%%%%%%%%%%%%%%%%%%%%%%%%%%%%%%%%%%%%%%%%%%%%%%%%%%%%%%%%%%
%\text{---------------------------------------------------------------------}
%{\color{red}
\begin{mytheorem}\label{the: stability}
Assume $\alpha_{j} \in L^\infty(0,+\infty)$, $j=1,\ldots,N$. For any $x_{N+1} \in \left(0,1\right]$ being a rational number with coprime factorization, there exist positive constants $C_1$, $C_2$ and $\lambda$ such that for any $0 \leq \tau \leq T < \infty$, there holds:
\begin{align}\label{Eq: exponential stability 3}
  \|z(\tau)\|_{X} &\leq  C_1e^{-\lambda \tau}\|z^0\|_{X} + C_2\|\alpha\|_{L^\infty(0,T)}.
\end{align}
\end{mytheorem}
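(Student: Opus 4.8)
The plan is to read \eqref{eq: indomain ABS} as a well-posed linear control system, to observe that its semigroup is exponentially stable, and then to glue this decay together with the admissibility of $B$ by a geometric-series argument that exploits the $L^{\infty}$-in-time bound on $\alpha$. From the definition of the solution in \eqref{Eq: weak_ansewr1} one obtains the mild representation
\begin{equation*}
z(\tau)=S(\tau)z^{0}+\int_{0}^{\tau}S(\tau-t)B\alpha(t)\,\mathrm{d}t=:S(\tau)z^{0}+\Phi_{\tau}\alpha ,
\end{equation*}
where $S(t)$ is the $C_{0}$-semigroup generated by $A$, and \eqref{Eq: exponential stability 3} will follow by bounding the two summands separately.

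For the homogeneous part I would first note that $A$ is $m$-dissipative on $X$: integrating by parts using the boundary and jump conditions defining $\mathcal{D}(A)$ yields $\langle Az,z\rangle_{X}=-k\,v(x_{N+1})^{2}\le 0$, so $A$ generates a contraction semigroup $S(t)$ (the generation theory being exactly that of Proposition~3.1 of \cite{ammari:2000}). The decisive ingredient is that, \emph{when $x_{N+1}$ is rational with coprime factorization}, the pointwise velocity feedback \eqref{Eq: feedback} renders the beam exponentially stable: there exist constants $C_{1}\ge 1$ and $\lambda>0$ with $\|S(t)\|_{\mathcal{L}(X)}\le C_{1}e^{-\lambda t}$ for all $t\ge 0$. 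This is precisely the pointwise-stabilization result of \cite{ammari:2000} (see also \cite{Ammari:2011,Delfour:1987}), in which the rationality of the actuator location is what produces the observability inequality underlying the decay rate. I expect this to be the main obstacle for a self-contained proof, since it rests on a nonharmonic Fourier / spectral analysis of the beam operator; in the write-up I would simply invoke it. It gives at once $\|S(\tau)z^{0}\|_{X}\le C_{1}e^{-\lambda\tau}\|z^{0}\|_{X}$.

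For the forced part, the well-posedness of \eqref{indomain_form N CL} (cf.\ Theorem~\ref{the: wellposedness} and the discussion following \eqref{Eq: weak_ansewr1}), together with the closed-graph theorem applied on the fixed interval $[0,1]$, shows that $B$ is an admissible control operator for $S(t)$: there is a constant $C_{\mathrm{adm}}>0$, depending only on the length $1$, such that
\begin{equation*}
\Big\|\int_{0}^{1}S(r)B\beta(r)\,\mathrm{d}r\Big\|_{X}\le C_{\mathrm{adm}}\,\|\beta\|_{L^{2}(0,1;U)},\qquad\forall\,\beta\in L^{2}(0,1;U).
\end{equation*}
Writing $\Phi_{\tau}\alpha=\int_{0}^{\tau}S(s)B\alpha(\tau-s)\,\mathrm{d}s$ and splitting $[0,\tau]$ into the unit intervals $[n,\min\{n+1,\tau\}]$, $n=0,\dots,\lfloor\tau\rfloor$, the semigroup property $S(s)=S(n)S(s-n)$ turns the contribution of the $n$-th interval into $S(n)\int_{0}^{1}S(r)B\alpha(\tau-n-r)\,\mathrm{d}r$, whose $X$-norm is at most $\|S(n)\|_{\mathcal{L}(X)}\,C_{\mathrm{adm}}\,\|\alpha\|_{L^{\infty}(0,T)}$, using $\|\alpha\|_{L^{2}}\le\|\alpha\|_{L^{\infty}}\le\|\alpha\|_{L^{\infty}(0,T)}$ on a unit interval — this is exactly where the hypothesis $\alpha_{j}\in L^{\infty}(0,+\infty)$ is needed, to avoid a $\sqrt{T}$ factor. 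Summing and using $\|S(n)\|_{\mathcal{L}(X)}\le C_{1}e^{-\lambda n}$,
\begin{equation*}
\|\Phi_{\tau}\alpha\|_{X}\le C_{\mathrm{adm}}\|\alpha\|_{L^{\infty}(0,T)}\sum_{n=0}^{\infty}C_{1}e^{-\lambda n}=\frac{C_{1}C_{\mathrm{adm}}}{1-e^{-\lambda}}\,\|\alpha\|_{L^{\infty}(0,T)} .
\end{equation*}
Setting $C_{2}:=C_{1}C_{\mathrm{adm}}/(1-e^{-\lambda})$ and adding the two bounds via the triangle inequality yields \eqref{Eq: exponential stability 3}. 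The only point needing care is that the admissibility constant is the same on every unit subinterval — which holds by time-invariance of $S(t)$ — so that the series above is a convergent geometric one, uniformly in $\tau$ and $T$.
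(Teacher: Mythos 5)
Your argument is correct, and it shares the paper's first half verbatim: the dissipativity computation $\langle Az,z\rangle_X=-k|v(x_{N+1})|^2$, generation of a contraction semigroup, and the exponential decay $\|S(t)\|_{\mathcal{L}(X)}\le C_1e^{-\lambda t}$ for rational $x_{N+1}$ imported from \cite{ammari:2000,Delfour:1987}. Where you genuinely diverge is in the treatment of the forced term. The paper stays inside the duality pairing $\langle z(\tau),y^\tau\rangle_X$ of \eqref{Eq: weak_ansewr1}, extracts the factor $\int_0^\tau e^{-\lambda(\tau-t)}\,\text{d}t\le 1/\lambda$, and controls the pointwise evaluations hidden in $B$ through the interpolation inequality and the Sobolev embedding $\|y^\tau\|_{H^1}\le c\|y^\tau\|_{X}$, arriving at $C_2=C_1c/\lambda$. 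You instead use the $L^2$-admissibility inequality for $B$ on a unit time interval, propagate it along $[0,\tau]$ with the semigroup property, and sum a geometric series, arriving at $C_2=C_1C_{\mathrm{adm}}/(1-e^{-\lambda})$. Your route is the standard ``exponential stability plus admissibility implies a uniform bound under $L^\infty$ inputs'' argument, and it is arguably the more careful of the two with respect to the unboundedness of $B$: it never requires evaluating the second component of a generic $y^\tau\in X=\Phi\times L^2(0,1)$ at the points $x_j$, a step that the paper's chain of norms glosses over and that is only legitimate after the time integration — which is precisely what admissibility encodes, and which the paper itself acknowledges by citing (3.38) of \cite{ammari:2000} at the outset. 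The paper's route is shorter and gives an explicit constant in terms of $\lambda$. Both versions ultimately rest on the same two external facts (admissibility of $B$ and exponential stability of $S(t)$ for rational $x_{N+1}$ with coprime factorization); your appeal to ``well-posedness plus the closed graph theorem'' for admissibility is no more self-contained than the paper's citation, so you should simply cite \cite{ammari:2000} for that step as well.
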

%%%%%%%%%%%%%%%% theorem 2 %%%%%%%%%%%%%%%%%%%%%%%%%%%%%%
\begin{proof}
The proof is a slight modification of Theorem~2.37 in \cite{coron2007}. First, the admissible property of $B$ can be obtained as (3.38) in \cite{ammari:2000}. A direct computation gives
\begin{align*}
\bigg\langle A\begin{pmatrix}
u\\
v
\end{pmatrix},\begin{pmatrix}
u\\
v
\end{pmatrix}\bigg\rangle_{X}=-k|v(x_{N+1})|^2,\ \ \ \ \forall \begin{pmatrix}
u\\
v
\end{pmatrix}\in\mathcal {D}(A).
\end{align*}
Therefore, $A$ is a dissipative operator. Moreover, it can be directly verified that $A$ is onto and hence, according to Theorems 4.3 and 4.6 of \cite{Pederson2000} (p.~14-15), it generates a $C_0$-semigroup of linear contractions $S(t)$ acting on $X$. Furthermore, $S(t)$ is
exponentially stable if $x_{N+1} \in (0,1)$ is a rational number with coprime factorization, in particular for $x_{N+1} = 1$ (see, e.g., \cite{ammari:2000,Delfour:1987}), i.e. there exist two positive constants $C_1$ and $\lambda$, such that
\begin{align*}
  \|S(t)\|_{\mathcal {L}(X;X)} &\leq  C_1e^{-\lambda t},\ \forall\  t\geq 0.
\end{align*}
Then for a weak solution defined in \eqref{Eq: weak_ansewr1}, we have that for all $0 \leq \tau \leq T < \infty$:
\begin{align*}
      &\langle z(\tau),y^{\tau}\rangle_X \\
    %=& \langle  z^0,S^*(\tau)y^{\tau}\rangle_X + \int_0^{\tau} \!\!\langle  \alpha(t),B^*S^*(\tau-t)y^{\tau}\rangle_{U}\text{d}t\\
    =& \langle  S(\tau)z^0,y^{\tau}\rangle_X + \int_0^{\tau} \!\!\langle  S(\tau-t)B\alpha(t),y^{\tau}\rangle_{U}\text{d}t\\
    =& \langle  S(\tau)z^0,y^{\tau}\rangle_X + \;\left\langle  \int_0^{\tau} S(\tau-t)B\alpha(t)\text{d}t,y^{\tau}\right\rangle_{U}\\
    \leq& C_1e^{-\lambda\tau}\|z^0\|_{X} \|y^{\tau}\|_{X}
         +C_1\|\alpha\|_{L^{\infty}(0,T)} \|y^{\tau}\|_{L^{\infty}(0,T)}\int_0^{\tau} e^{-\lambda(\tau-t)}\text{d}t \\
    \leq& C_1e^{-\lambda\tau}\|z^0\|_{X} \|y^{\tau}\|_{X} + \dfrac{C_1}{\lambda}\|\alpha\|_{L^{\infty}(0,T)} \|y^{\tau}\|_{L^{\infty}} \\
    \leq& C_1e^{-\lambda\tau}\|z^0\|_{X}\|y^{\tau}\|_{X} + \dfrac{C_1}{\lambda}\|\alpha\|_{L^{\infty}(0,T)}\|y^{\tau}\|_{H^1}.
\end{align*}
Applying the interpolation formula
\begin{equation*}
\|u_{x}\|_{L^2}\leq c\left( \|u\|_{L^2}  + \|u_{\mathit{xx}}\|_{L^2}\right),
\end{equation*}
where $c$ is a positive constant, and the Sobolev embedding theorem yields
\begin{equation*}
   \|y^{\tau}\|_{H^1}\leq c\|y^{\tau}\|_{H^2}\leq c\|y^{\tau}\|_{X}.
\end{equation*}
We have then
\begin{equation*}
\langle z(\tau),y^{\tau}\rangle_X \leq \left(C_1e^{-\lambda\tau}\|z^0\|_{X}
                                           +\dfrac{C_1c}{\lambda}\|\alpha\|_{L^{\infty}(0,T)} \right)\|y^{\tau}\|_{X},
\end{equation*}
which implies that \eqref{Eq: exponential stability 3} holds with $C_2 = C_1c/\lambda$.
\end{proof}
%}
%\text{---------------------------------------------------------------------}

Now consider the regulation error defined as $e(x,t) = w(x,t) - \overline{w}^{d}(x)$. Denoting $\Delta\alpha_j(x,t) = \alpha_j(x,t) - \overline{\alpha}_j(x)$, $j = 1, \ldots, N$, then from \eqref{indomain_form N CL}, \eqref{BC1}, \eqref{IV} and the steady-state model of \eqref{indomain_form ff}, the regulation error dynamics satisfy
\begin{subequations}\label{target_error}
\begin{align}
   &e_{tt}(x,t) + e_{\mathit{xxxx}}(x,t) + k e_t\left(x,t\right)\delta(x-x_{N+1}) \notag\\
  =& \sum_{j=1}^{N} \Delta\alpha_j(t)\delta(x-x_j),
   \; x\in(0,1), t > 0, \label{target1_error} \\
  &e(0,t)=e_{x}(1,t)=e_{\mathit{xx}}(0,t)=e_{\mathit{xxx}}(1,t) = 0, \;\; t>0, \label{BC3_error}\\
  &e(x,0) = e_0(x) = h_0(x)-\overline{w}^d(x),\notag\\
  &e_{t}(x,0)=e_1(x) = h_1(x), \;\;\;\;\;\;\;\;\;\;\;\; x\in (0,1). \label{IV2_error}
\end{align}
\end{subequations}

Obviously, the regulation error dynamics are in an identical form
as \eqref{indomain_form N CL} with the same type of boundary conditions.
We can then consider the solution of System~\eqref{target_error}
defined in the same form given by \eqref{Eq: weak_ansewr1}
with $z_e = (e, e_t)$ and $z^0_e = (e_0, e_1)$.
\begin{mycorollary}\label{corollary error convegency}
Assume that all the conditions in Theorem~\ref{the: stability} are
fulfilled and $ z^0_e \in X$. Then there exist positive constants
$C_1$, $C_2$ and $\lambda$, independent of $t$, such that for any
$t\geq 0$, there holds:
\begin{align}\label{Eq: exponential stability 4}
        \|z_e(t)\|_{X} &\leq  C_1e^{-\lambda t}\|z^0_e\|_{X}
        +
        C_2\|\Delta\alpha\|_{L^\infty(0,+\infty)}.
\end{align}
Moreover, if $\lim_{t\rightarrow \infty}\Delta\alpha_j = 0$ for all $j = 1,\ldots, N$, then $\lim_{t\rightarrow \infty}e(x,t) = 0$, $\forall x\in (0,1)$.
\end{mycorollary}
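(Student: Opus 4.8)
The plan is to exploit the observation, already made right before the corollary, that the regulation error dynamics \eqref{target_error} have exactly the same structure as the closed-loop system \eqref{indomain_form N CL}, with the feedforward increments $\Delta\alpha=(\Delta\alpha_1,\ldots,\Delta\alpha_N)^T$ playing the role of $\alpha$ and $z^0_e=(e_0,e_1)^T$ that of $z^0$. Hence, arguing as in the proof of Theorem~\ref{the: stability}, the weak (transposition) solution $z_e$ admits the representation
\begin{equation*}
z_e(t) = S(t)z^0_e + \int_0^{t} S(t-s)B\,\Delta\alpha(s)\,\mathrm{d}s ,
\end{equation*}
where $S(t)$ is the exponentially stable $C_0$-semigroup generated by $A$ and $B$ is the admissible input operator from that proof. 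Since $\alpha_j\in L^\infty(0,+\infty)$ and $\overline{\alpha}_j$ is a constant, we have $\Delta\alpha_j\in L^\infty(0,+\infty)$, and $\|\Delta\alpha\|_{L^\infty(0,t)}\le\|\Delta\alpha\|_{L^\infty(0,+\infty)}$ for every $t\ge 0$. Applying Theorem~\ref{the: stability} to the error system with $T=\tau=t$ then yields estimate \eqref{Eq: exponential stability 4} at once, with the same constants $C_1$, $C_2$, $\lambda$.

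For the asymptotic claim, I would first show $\|z_e(t)\|_X\to 0$ and then transfer this to pointwise convergence of $e$. The free term $S(t)z^0_e$ goes to zero in $X$ exponentially by the stability of $S(t)$. For the convolution term, assuming $\Delta\alpha_j(t)\to 0$ for each $j$, fix $\varepsilon>0$ and pick $T_0$ such that $\|\Delta\alpha(s)\|_{U}<\varepsilon$ for a.e.\ $s\ge T_0$; then split $\int_0^{t}=\int_0^{T_0}+\int_{T_0}^{t}$ for $t>T_0$. The head term equals $S(t-T_0)\zeta_{T_0}$ with $\zeta_{T_0}:=\int_0^{T_0}S(T_0-s)B\,\Delta\alpha(s)\,\mathrm{d}s\in X$ (finite by admissibility of $B$), so its $X$-norm is bounded by $C_1 e^{-\lambda(t-T_0)}\|\zeta_{T_0}\|_X\to 0$ as $t\to\infty$. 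The tail term is bounded in $X$-norm by $C_2\,\sup_{s\ge T_0}\|\Delta\alpha(s)\|_{U}\le C_2\varepsilon$, uniformly in $t$, via the very same admissibility-plus-exponential-decay estimate that produced the constant $C_2$ in Theorem~\ref{the: stability}. Letting $t\to\infty$ and then $\varepsilon\to 0$ gives $\int_0^{t}S(t-s)B\,\Delta\alpha(s)\,\mathrm{d}s\to 0$ in $X$, whence $\|z_e(t)\|_X\to 0$.

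Finally, since $\Phi\subset H^2(0,1)$ embeds continuously into $C([0,1])$ by the Sobolev embedding theorem, $\|z_e(t)\|_X\to 0$ forces $\sup_{x\in[0,1]}|e(x,t)|\le c\,\|e(\cdot,t)\|_{H^2(0,1)}\le c\,\|z_e(t)\|_X\to 0$, which delivers $\lim_{t\to\infty}e(x,t)=0$ for every $x\in(0,1)$. I expect the only delicate point to be the uniform-in-$t$ control of the tail of the convolution integral: it is not enough that each $\Delta\alpha_j$ be small for large time; one needs admissibility of $B$ together with the exponential decay of $S(t)$ to ensure that the energy injected after $T_0$ stays uniformly small in $X$, no matter how large $t$ becomes — this is precisely the mechanism behind $C_2$, and invoking it carefully is the crux. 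Everything else (the variation-of-constants form of the weak solution, the embedding, the exponential decay of the free term) is routine given the results already established.
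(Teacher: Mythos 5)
Your proposal is correct and follows exactly the route the paper intends: the paper gives no explicit proof of this corollary, relying on the observation that the error dynamics \eqref{target_error} are structurally identical to \eqref{indomain_form N CL}, so estimate \eqref{Eq: exponential stability 4} is just Theorem~\ref{the: stability} applied to $z_e$ with $\|\Delta\alpha\|_{L^\infty(0,t)}\leq\|\Delta\alpha\|_{L^\infty(0,+\infty)}$. Your tail-splitting argument for the asymptotic claim and the passage from $\|z_e(t)\|_X\to 0$ to pointwise convergence of $e$ (which uses that $\|e_{xx}\|_{L^2}$ controls the full $H^2$ norm on $\Phi$, valid here since $e(0)=e_x(1)=0$, together with the Sobolev embedding) correctly supply the details the paper leaves implicit.
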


The feedforward control satisfying the conditions for closed-loop stability and regulation error convergence can be obtained through motion planning, as presented in the next section.

%%%%%%%%%%%%%%%%%%%%%%%%%%%%%%%%%%%%%%%%%%%%%%%%%%%%%%%%%%%%%
\section{Motion Planning and Feedforward Control}\label{Sec: Controller design}

According to the principle of superposition for linear systems, we consider in feedforward control design the dynamics of System~\eqref{target} corresponding to the input $g_j(t)$ described in the following form:
\begin{subequations}\label{sys_tracking}
\begin{align}
  &u_{jtt}(x,t)+ u_{\mathit{jxxxx}}(x,t) = 0,\label{sys_tracking_main}\\
  &u_{j}(0,t)=u_{jx}(1,t)=u_{\mathit{jxx}}(0,t)=0,&\label{bc_2}\\
  &u_{\mathit{jxxx}}(1,t) = g_j(t),& \label{in_j}\\
  &u_{j}(x,0) =u_{jt}(x,0) =0.
\end{align}
\end{subequations}
The required control signal $g_j(t)$ should be designed so that the output of System~\eqref{sys_tracking} follows a prescribed function $u_j^d(x_j,t)$, while guaranteeing that the conditions in Theorem~\ref{the: stability} and Corollary~\ref{corollary error convegency} are fulfilled.

Motion planning amounts then to taking $u_j^d(x_j,t)$ as the desired output and to generating the full-state trajectory of the subsystem $u_j(x,t)$.
By applying a standard Laplace transform-based procedure (see, e.g., \cite{Meurer:2013,Aoustin1997,Fliess:1999-CDC,Rudolph2003}), we can obtain the full-state trajectory with zero initial values expressed in terms of the so-called flat output, $y_j(t)$, and its time-derivatives:
\begin{equation}\label{+16}
\begin{split}
 u_{j}(x,t)
=&\left(\frac{1}{2}x-\frac{1}{6}x^{3}\right)y_j(t)\\
 &+\sum _{n=1}^{\infty}\left(
   \sum_{k=0}^{n}\frac{x^{4k+1}}{(4k+1)!(4(n\!-\!k)+2)!}\right. \\
 &\left.-\sum_{k=0}^{n}\frac{x^{4k+3}}{(4k+3)!(4(n\!-\!k))!}
  \right)(-1)^{n}y_j^{(2n)}(t).
\end{split}
\end{equation}

Now let $y_j(t)= \overline{y}_j\phi_j(t)$, where $\phi_{j}(t)$ is a smooth function evolving from 0 to 1. The full-state trajectory can then be written as
\begin{equation}\label{+17}
\begin{split}
 u_{j}(x,t)
=&\overline{y}_j\left(\frac{1}{2}x-\frac{1}{6}x^{3}\right)\phi_j(t)\\
 &+\overline{y}_j\sum _{n=1}^{\infty}\left(
   \sum_{k=0}^{n}\frac{x^{4k+1}}{(4k+1)!(4(n\!-\!k)+2)!} \right.\\
 &\left. -\sum_{k=0}^{n}\frac{x^{4k+3}}{(4k+3)!(4(n\!-\!k))!}
  \right)(-1)^{n}\phi_j^{(2n)}(t).
\end{split}
\end{equation}

The corresponding input can be computed from \eqref{in_j}, which yields
\begin{equation}\label{+18}
\begin{split}
g_{j}(t)=&-\overline{y}_j\phi_j(t)+\overline{y}_j\sum_{n=1}^{\infty}
         \left(\sum_{k=1}^{n}\frac{1}{(4k-2)!(4(n\!-\!k)+2)!} \right.\\
         &\left.-\sum_{k=0}^{n}\frac{1}{(4k)!(4(n\!-\!k))!}
        \right)(-1)^{n}\phi_j^{(2n)}(t).
\end{split}
\end{equation}

For set-point control, we need an appropriate class of trajectories enabling a rest-to-rest
evolution of the system. A convenient choice of $\phi_j(t)$ is the following smooth function:
\begin{equation}\label{Gevrey function}
\phi_{j}(t) = \left\{
\begin{array}{l l}
  0, & \quad \mbox{if $t \leq 0$}\\
  \dfrac{\displaystyle\int_0^t \exp(-1/(\tau(1-\tau)))^{\varepsilon}\text{d}\tau}{\displaystyle\int_0^T \exp(-1/(\tau(1-\tau)))^{\varepsilon}\text{d}\tau}, & \quad \mbox{if $t\in (0,T)$}\\
  1, & \quad \mbox{if $t \geq T$}\\
  \end{array} \right.
\end{equation}
which is known as Gevrey function of order $\sigma=1+1/\varepsilon$,
$\varepsilon>0$ (see, e.g., \cite{Rudolph2003}).

For the convergence of \eqref{+17} and \eqref{+18}, we have
\begin{myproposition}\label{proposition 2}
If $\phi_j(t)$ in the basic output $y^j(t) = \overline{y}^j\phi_j(t)$ is chosen
as a Gevrey function of order $1<\sigma<2$, then the infinite series
\eqref{+17} and \eqref{+18} are convergent.
\end{myproposition}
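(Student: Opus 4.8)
The plan is to establish convergence of the two series \eqref{+17} and \eqref{+18} by obtaining a sufficiently sharp bound on the time-derivatives $\phi_j^{(2n)}(t)$ of a Gevrey function of order $\sigma$, and then matching this against the decay of the coefficients appearing in the series. Recall that, by the defining property of a Gevrey function of order $\sigma$ on a compact interval $[0,T]$, there exist constants $M>0$ and $R>0$ such that
\begin{equation*}
\sup_{t\in[0,T]}\bigl|\phi_j^{(m)}(t)\bigr| \leq M\,\frac{(m!)^{\sigma}}{R^{m}}, \qquad m = 0,1,2,\ldots
\end{equation*}
The first step is simply to invoke this estimate with $m = 2n$, giving $|\phi_j^{(2n)}(t)| \leq M (2n)!^{\sigma}/R^{2n}$.

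The second step is to bound the bracketed coefficients in \eqref{+17} and \eqref{+18}. For \eqref{+18} the coefficient of $(-1)^n\phi_j^{(2n)}(t)$ is a finite sum of terms of the form $1/((4k)!(4(n-k))!)$ and $1/((4k-2)!(4(n-k)+2)!)$; each such term is at most $1/(4n)! \cdot \binom{4n}{4k}$ in magnitude after the obvious manipulation, so the whole bracket is bounded by $C\, 2^{4n}/(4n)!$ for some absolute constant $C$ (crudely, $n+1$ terms each no larger than $1/(something)!$, and one can afford to be wasteful). Uniformly in $x\in[0,1]$ the analogous statement holds for the bracket in \eqref{+17}, since the extra powers $x^{4k+1},x^{4k+3}$ are all $\leq 1$. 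Thus, up to constants, both series are dominated term-by-term by
\begin{equation*}
\sum_{n=1}^{\infty} \frac{2^{4n}}{(4n)!}\cdot \frac{(2n)!^{\sigma}}{R^{2n}}.
\end{equation*}
The third step is a ratio/root test on this majorant: using Stirling, $(2n)!^{\sigma} \sim (2n)^{2n\sigma} e^{-2n\sigma}\sqrt{\cdot}$ while $(4n)! \sim (4n)^{4n} e^{-4n}\sqrt{\cdot}$, so the general term behaves like $\exp\bigl(2n\sigma\log(2n) - 4n\log(4n) + O(n)\bigr)$. The leading growth is $2n(\sigma-2)\log n$, which tends to $-\infty$ precisely when $\sigma<2$; hence the majorant converges for $1<\sigma<2$, and the comparison test finishes the argument (the convergence of \eqref{+17} being moreover uniform in $x\in[0,1]$, so that $u_j(\cdot,t)$ is a well-defined smooth function).

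I expect the main obstacle to be the bookkeeping in the second step — getting a clean uniform bound on the bracketed coefficient sums in terms of $1/(4n)!$ times a harmless exponential factor, and making sure the same bound works simultaneously for \eqref{+17} (uniformly in $x$) and for \eqref{+18}. Once that combinatorial estimate is in place, the decisive cancellation $(2n)!^{\sigma}$ versus $(4n)!$ is transparent and the Gevrey threshold $\sigma<2$ falls out immediately from Stirling's formula. One should also briefly note that the termwise-differentiated series (needed to justify that \eqref{+16} indeed solves \eqref{sys_tracking_main} and to evaluate \eqref{in_j}) converge by the same estimate with $2n$ replaced by $2n+$const, which does not affect the threshold.
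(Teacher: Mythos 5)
Your argument is correct and follows essentially the same route as the paper's own proof: bound the bracketed coefficients uniformly in $x\in[0,1]$ by a binomial-coefficient manipulation yielding a factor of order $2^{4n}/(4n)!$, insert the Gevrey estimate $|\phi_j^{(2n)}(t)|\leq M\,((2n)!)^{\sigma}/R^{2n}$, and apply Stirling's formula (the paper phrases this as the Cauchy--Hadamard root test on the general term $b_n$, which is equivalent to your root test on the majorant) to obtain the threshold $\sigma<2$. No substantive differences.
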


The proof of Proposition~\ref{proposition 2} is based on the bounds of Gevrey functions of order $\sigma$ given by (see, e.g., \cite{Dunbar:2003,Rodino:1993}):
\begin{equation}\label{gevrey}
    \exists K, M>0, \forall k \in \mathbb{Z}_{\geq 0}, \forall t\in\left[t_0,T\right], \left| \phi_{j}^{(k+1)}(t)\right|\leq M\frac{(k!)^{\sigma}}{K^k}.
\end{equation}
The detailed convergence analysis is given in Appendix~\ref{Appendix: Proof of Proposition 2}.

\begin{remark}
In general, the Gevrey bounds are unknown, but can be estimated
following the way presented in \cite{Dunbar:2003}. Furthermore, a
symmetric function in the transient phase can be considered to
improve convergency analysis \cite{Dunbar:2003}.
\end{remark}
Now let
\begin{subequations}
\begin{align*}
P_j(x)=&\frac{1}{2}x-\frac{1}{6}x^3,\notag\\
I_{j,m}(x)=&\int_{0}^{x}\int_{1}^{z}\int_{0}^{y}
\int_{1}^{t}\varphi_{m}(s-x_{j})\text{d}s\text{d}t\text{d}y\text{d}z,\notag\\
\Phi_{j,n}(x)=&\bigg(\sum_{k=0}^{n}\frac{x^{4k+1}}{(4k+1)!(4(n\!-\!k)+2)!}
               -\frac{x^{4k+3}}{(4k+3)!(4(n\!-\!k))!}\bigg)(-1)^{n},\notag\\
\Psi_{j,n}(1)=&\bigg(\sum_{k=1}^{n}\frac{1}{(4k-2)!(4(n\!-\!k)+2)!}
               -\sum_{k=0}^{n}\frac{1}{(4k)!(4(n\!-\!k))!}\bigg)(-1)^{n}.\notag
\end{align*}
\end{subequations}
Set $\sum_{j=1}^N\psi_{j}^{m}(x,t)=\psi_{m}(x,t), m>0$. By the definition of $\psi(x,t)$, we obtain
\begin{equation}\label{expansion of psi}
\begin{split}
\psi_{j}^{m}(x,t)=&\overline{y}_{j}I_{j,m}(x)\phi_{j}(t)
  +\overline{y}_{j}\sum_{n=1}^{\infty}\left( \Phi_{j,n}(x) \right.\\
&\left.+\Psi_{j,n}(1)P_{j}(x)-\Psi_{j,n}(1)I_{j,m}(x)\right)\phi_{j}^{(2n)}(t).
\end{split}
\end{equation}
To compute $I_{j,m}(x)$, we note that
\begin{align*}
\varphi_{m}(x-x_j)=\frac{1}{\pi}
\sum_{k=1}^{\infty}\frac{(-1)^{k+1}m^{2k-1}}{(2k-1)!}(x-x_{j})^{2k-2}.
\end{align*}
Therefore,
\begin{equation}\label{expansion of I}
\begin{split}
I_{j,m}(x)=&\frac{1}{\pi}
\sum_{k=1}^{\infty}\frac{(-1)^{k+1}m^{2k-1}}{(2k-1)}
\bigg(\frac{(x-x_{j})^{2k+2}}{(2k+2)!} \\
&-\frac{ (1-x_{j})^{2k-1}x^{3}}{6((2k-1)!)}
-\frac{x_{j}^{2k}x^2}{2(2k)!}
-\frac{(1-x_j)^{2k+1}x}{(2k+1)!}\\
& +
\frac{x_j^{2k}x}{(2k)!}
+\frac{(1-x_j)^{2k-1}x}{2(2k-1)!}
-\frac{x_j^{2k+2}}{(2k+2)!}
\bigg).
\end{split}
\end{equation}
\begin{myclaim}\label{Cl: claim 1}
$I_{j,m}(x)$ given in \eqref{expansion of I} is convergent for all $x, x_j \in (0,1)$ with respect to any fixed $m$.
\end{myclaim}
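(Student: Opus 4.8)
The plan is to show absolute convergence of the series \eqref{expansion of I} for each fixed $m>0$ and each fixed $x,x_j\in(0,1)$ by a term-by-term estimate, reducing everything to the exponential series $\sum m^{2k}/(2k)!=\cosh m$. First I would observe that the prefactor $\tfrac{(-1)^{k+1}m^{2k-1}}{(2k-1)}$ in absolute value is bounded by $m^{2k-1}$, and that each of the seven terms inside the large parentheses is, up to a constant depending only on $x$ and $x_j$ (hence bounded on $(0,1)$), of the form $(1-x_j)^a$ or $x_j^a$ divided by a factorial $(2k+2)!$, $(2k)!$ or $(2k-1)!$ with $a\in\{2k-1,2k,2k+1,2k+2\}$. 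Since $0<x_j<1$ and $0<1-x_j<1$, every such power is bounded by $1$; likewise $|x-x_j|<1$ so $(x-x_j)^{2k+2}\le 1$.

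The key step is then the crude bound: the $k$-th term of \eqref{expansion of I} is dominated, in absolute value, by a finite sum of expressions of the form
\begin{equation*}
\frac{m^{2k-1}}{(2k-1)}\cdot\frac{1}{(2k-1)!}\;\le\;\frac{m^{2k}}{(2k)!}\cdot\frac{(2k)!}{(2k-1)(2k-1)!\,m}
\;=\;\frac{m^{2k}}{(2k)!}\cdot\frac{2k}{(2k-1)m},
\end{equation*}
and the factor $\tfrac{2k}{(2k-1)m}$ is bounded (by $\tfrac{2}{m}$, say, for $k\ge1$). Hence, after collecting the (finitely many, seven) terms and using that $1/(2k+2)!\le 1/(2k)!$ and $1/(2k-1)!\le 2k/(2k)!\le k\cdot 2/(2k)!$, the general term of \eqref{expansion of I} is bounded by $C(x,x_j,m)\,k\,m^{2k}/(2k)!$ for a constant $C$ depending only on $x$, $x_j$, $m$. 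Since $\sum_{k\ge1} k\,m^{2k}/(2k)!<\infty$ (it is dominated by $\sum m^{2k}/(2k)!\cdot k$, which converges by the ratio test, or equals a combination of $\cosh m$ and $m\sinh m$), the Weierstrass $M$-test gives absolute, hence ordinary, convergence of \eqref{expansion of I}.

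The only mild obstacle is bookkeeping: one must verify that the interchange of summation and fourfold integration used to pass from $\varphi_m(x-x_j)=\tfrac1\pi\sum_k \tfrac{(-1)^{k+1}m^{2k-1}}{(2k-1)!}(x-x_j)^{2k-2}$ to the closed form \eqref{expansion of I} is legitimate — but this is immediate since the power series for $\tfrac{\sin m(\cdot)}{(\cdot)}$ converges uniformly on the compact interval $[0,1]$, so it may be integrated term by term, and each term integrates to exactly the displayed polynomial-in-$x$ expression after matching the boundary conditions \eqref{Hj}. No new ideas are needed beyond this uniform-convergence justification and the elementary factorial estimates above.
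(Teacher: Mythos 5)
Your proof is correct and takes essentially the same approach as the paper's: both dominate the general term, using that every power of $x$, $x_j$, $1-x_j$, $x-x_j$ appearing is bounded by $1$ on $(0,1)$, by a convergent exponential-type majorant (the paper uses $\sum_k m^{2k-1}/(2k-1)!$ with the root test, you use $\sum_k k\,m^{2k}/(2k)!$ via the ratio test/$\cosh m$), and then conclude by comparison. Your closing remark on justifying the term-by-term fourfold integration of the power series of $\varphi_m$ is a reasonable extra that the paper omits, but it is not required for the claim as stated.
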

The proof of this claim is given in Appendix~\ref{Appendix: Proof of Claim 1}.

Based on Claim~\ref{Cl: claim 1}, the series in the right hand side of \eqref{expansion of psi} are convergent and $\psi_{j}^{m}(x,t)$ can be expanded by \eqref{expansion of psi} and \eqref{expansion of I}. Moreover, for $g_j(t)$ given in \eqref{+18}, $\psi_{j}^{m}(x,t)$ tends to
$
\overline{\psi}_{j}^{m}(x)=\overline{y}_{j}I_{j,m}(x)
$ as $t\rightarrow \infty$.
Note that for fixed $x$, the radius of convergence of $\overline{\psi}_{j,m}$ on $m$ is $\infty$. Thus, we can let $m\rightarrow +\infty$.

To complete the control design, we need to determine the amplitude of flat outputs $\overline{y}_j$, $j = 1, \ldots, N$, from the desired shape $\widetilde{w}^d(x)$ that may not necessarily be a solution of the steady-state beam equation \eqref{steady beam equation}. To this end, we propose to interpolate $\widetilde{w}^d(x)$ using the Green's function of \eqref{steady beam equation}, denoted by $G(x,\xi)$, which is the impulse response corresponding to the input $\delta(x-\xi)$ and is given by
\begin{equation*}\label{beam green's function j}
    G(x,\xi_j)=\left\{
               \begin{array}{ll}
               -\dfrac{x^3}{6}+x\xi_j\left(1-\dfrac{\xi_j}{2}\right) , & 0 \leq x < \xi_j; \\
              -\dfrac{\xi^3_j}{6}+\xi_j x\left(1-\dfrac{x}{2}\right) , & \xi_j \leq x \leq 1.
               \end{array}
             \right.
\end{equation*}
Due to the principle of superposition for linear systems, the solution to \eqref{steady beam equation}, $\overline{w}^d(x)$, can be expressed as
\begin{align*}\label{Eq: ss ref}
    \overline{w}^d(x) =& \int_0^1 \sum_{j=1}^N G(x,\xi)\overline{\alpha}_j \delta(x-\xi_j)\text{d}\xi \nonumber\\
                      =& \sum_{j=1}^{N} G(x,\xi_j)\overline{\alpha}_j.
\end{align*}
Now taking $N$ points on $\widetilde{w}^d(x)$ and letting $\overline{w}^d(x_j) = \widetilde{w}^d(x_i=j)$, $j = 1, \ldots, N$, yield
\begin{equation}\label{Eq: SS responses}
\begin{pmatrix}
   G(x_{1},\xi_{1}) & \ldots&
   G(x_{N},\xi_{1}) \\
   \vdots &\ddots & \vdots\\
   G(x_{1},\xi_{N}) & \ldots &
   G(x_{N},\xi_{N})\\
\end{pmatrix}
\begin{pmatrix}
\overline{\alpha}_{1}\\
\vdots\\
\overline{\alpha}_{N} \\
\end{pmatrix}
=\begin{pmatrix}
\widetilde{w}^d(x_1)\\
\vdots\\
\widetilde{w}^d(x_N) \\
\end{pmatrix},
\end{equation}
which represents a steady-state input to output map.
\begin{myclaim}\label{Cl: claim 2}
The map of \eqref{Eq: SS responses} is invertible for all $x_j, \xi_j \in (0, 1)$, $j = 1, \ldots, N$ and $x_i \neq x_j, \xi_i \neq \xi_j$, if $i \neq j$.
\end{myclaim}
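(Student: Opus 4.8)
The plan is to show that the matrix appearing in \eqref{Eq: SS responses} is injective; being square, injectivity yields invertibility. Since a matrix and its transpose have the same rank, it suffices to treat $M=[G(x_i,\xi_j)]_{i,j=1}^N$. Suppose $Mc=0$ for some $c=(c_1,\dots,c_N)^T$ and associate with $c$ the beam deflection
\begin{equation*}
u(x)=\sum_{j=1}^N c_j\,G(x,\xi_j),
\end{equation*}
which, by the defining property of the Green's function of \eqref{steady beam equation}, satisfies $u_{xxxx}=\sum_{j=1}^N c_j\delta(x-\xi_j)$ together with the four boundary conditions $u(0)=u_{xx}(0)=u_x(1)=u_{xxx}(1)=0$. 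The relation $Mc=0$ says precisely that $u$ vanishes at the $N$ distinct interior nodes $x_1,\dots,x_N$. The entire argument then reduces to the claim that a \emph{nonzero} such $u$ has at most $N-1$ zeros in $(0,1)$; equivalently, that $\{G(\cdot,\xi_j)\}_{j=1}^N$ is a Chebyshev system, and it is here that the essential property of the beam Green's function enters. Note that this formulation needs no ordering of the $x_i$ or $\xi_j$ beyond distinctness, so it addresses the statement for general distinct families directly.

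To bound the zeros I would climb the chain $u\to u_x\to u_{xx}\to u_{xxx}\to u_{xxxx}$, using at each level one of the four boundary zeros to anchor a (generalized) Rolle argument. Writing $Z$ for the number of distinct zeros of $u$ in $(0,1)$: adjoining $u(0)=0$ gives $Z+1$ zeros of $u$ in $[0,1)$, so $u_x$ has at least $Z$ zeros in $(0,1)$ by Rolle; adjoining $u_x(1)=0$ and applying Rolle again yields at least $Z$ zeros of $u_{xx}$ in $(0,1)$; adjoining $u_{xx}(0)=0$ then gives $u_{xx}$ at least $Z+1$ zeros in $[0,1)$. These steps are legitimate because $u\in C^2$. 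Finally, integrating $u_{xxxx}$ from $x=1$ with $u_{xxx}(1)=0$ shows that $u_{xxx}$ is the piecewise-constant function $-\sum_{\xi_j>x}c_j$, which takes $N+1$ values on the subintervals cut out by the loads, the rightmost value being $0$; hence $u_{xxx}$ has at most $N-1$ sign changes. Since between two consecutive zeros of the piecewise-linear $u_{xx}$ the slope $u_{xxx}$ must change sign, the $\ge Z+1$ zeros of $u_{xx}$ force at least $Z$ sign changes of $u_{xxx}$, giving $Z\le N-1$. As $u$ vanishes at the $N$ distinct nodes $x_i$, this forces $u\equiv 0$, whence $u_{xxxx}=\sum_j c_j\delta(x-\xi_j)=0$ and, the $\xi_j$ being distinct, $c=0$.

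The main obstacle is the top of the chain, where $u$ loses smoothness: $u_{xxx}$ jumps at each load $\xi_j$, so I cannot apply the classical Rolle theorem to pass from $u_{xx}$ to $u_{xxx}$ and must instead argue with generalized sign changes of a piecewise-constant function, while carefully exploiting $u_{xxx}(1)=0$ to cap the count at $N-1$ rather than $N$. A secondary technical point is the degenerate possibility that $u$ vanishes identically on one of the subintervals $(\xi_k,\xi_{k+1})$, which would make the naive count of distinct zeros infinite; this case must be excluded or absorbed by counting zeros with multiplicity. Both difficulties are structural manifestations of the fact that the beam Green's function is an oscillation kernel for the disconjugate operator $d^4/dx^4$ under the present separated, non-resonant boundary conditions, so that all its collocation determinants at strictly ordered distinct nodes are strictly positive; this total-positivity viewpoint (Gantmacher--Krein/Karlin) offers an alternative, citation-based route to the same conclusion and a useful cross-check on the sign bookkeeping.
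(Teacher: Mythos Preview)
Your approach is sound and genuinely different from the paper's. The paper argues by contradiction that if the matrix were singular, one column would be a linear combination of the others, so the function $x\mapsto G(x,\xi_N)-\sum_{i<N}k_iG(x,\xi_i)$ would have $N$ distinct zeros in $(0,1)$; it then disposes of this by asserting that each $G(\cdot,\xi_j)$ is ``of order at most $3$'' and hence the difference can have at most three roots (with a separate elementary check when $N\le 3$). This is short but leans hard on the explicit low-degree polynomial form of the Green's function and, as written, glosses over the fact that the $G(\cdot,\xi_j)$ are only \emph{piecewise} cubics, so a linear combination of $N$ of them is a $C^2$ piecewise cubic with $N$ breakpoints, not a single cubic. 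Your Rolle--chain argument, by contrast, uses only the boundary conditions and the structure $u_{xxxx}=\sum_j c_j\delta(\cdot-\xi_j)$, and lands squarely in the Gantmacher--Krein oscillation-kernel framework; it therefore supplies exactly the zero-counting that the paper's degree heuristic skips, and would transfer to other separated self-adjoint boundary conditions where no explicit Green's formula is available.

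The two obstacles you flag are real but minor. For the top step, your replacement of Rolle by ``sign changes of the piecewise-constant $u_{xxx}$'' is the right move, and the count $\le N-1$ is correct because the rightmost value vanishes. The degenerate case where $u_{xx}$ (or $u$) vanishes on a whole subinterval does need to be closed off; the cleanest fix is to reduce first to the case where all $c_j\neq 0$ and no two consecutive values of $u_{xxx}$ coincide (drop the $\xi_j$'s with $c_j=0$; this only decreases $N$ while the $N$ interpolation nodes $x_i$ remain), after which $u_{xxx}$ has no zero plateaux except the final one and $u_{xx}$ is strictly piecewise linear with nonzero slopes, so the naive zero count goes through. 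Alternatively, as you note, one can simply invoke total positivity of the Green's function for a disconjugate fourth-order operator under these boundary conditions.
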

The proof of this claim is given in Appendix~\ref{Appendix: Proof of Claim 2}.

As $\overline{\alpha}_j=-\overline{g}_j$ and $\lim_{t\rightarrow \infty} g_j(t) = \overline{g}_j = -\overline{y}_j$ for all $j = 1,\ldots, N$, we obtain from Claim~\ref{Cl: claim 2} that
\begin{equation}\label{Eq: amptidue of flat output}
\begin{pmatrix}
\overline{y}_{1}\\
\vdots\\
\overline{y}_{N} \\
\end{pmatrix}=
\begin{pmatrix}
   G(x_{1},\xi_{1}) & \ldots&
   G(x_{N},\xi_{1}) \\
   \vdots &\ddots & \vdots\\
   G(x_{1},\xi_{N}) & \ldots &
   G(x_{N},\xi_{N})\\
\end{pmatrix}^{-1}\!\!\!\!
\begin{pmatrix}
\widetilde{w}^d(x_1)\\
\vdots\\
\widetilde{w}^d(x_N) \\
\end{pmatrix}.
\end{equation}

\section{Simulation Study} \label{simulation}
In the simulation study, we consider the deformation control in which the desired shape is given by
\begin{equation}\label{Eq: desired shape}
\begin{split}
  \widetilde{w}^d(x) =& -10^{-3}\left(e^{-100\left(x-0.4\right)^2} + 2e^{-100\left(x-0.6\right)^2} \right.\\
                      &\left. +3e^{-400\left(x-0.7\right)^2}\right), \; x\in (0, 1),
\end{split}
\end{equation}
as shown in Fig.~\ref{No of actuators 1}. Note that the maximum amplitude of the shape given by \eqref{Eq: desired shape} is $3.8\times 10^{-3}$, which represents a typical micro-structure for which the beam length is of few centimeters and the displacement is of micrometer order.

In order to obtain an exponential closed-loop convergency, the actuator for feedback stabilization is located at the position $x_{N+1}=1$. To evaluate the effect of the number of actuators to interpolation accuracy, measured by $\|\widetilde{w}^d(x)-\overline{w}^d(x) \|_{L^1(0,1)}$ and control effort, we considered 3 setups with, respectively, 8, 12 and 16 actuators evenly distributed in the domain. It can be seen from Fig.~\ref{Fig: No of actuators} that the setup with 8~actuators exhibits an important interpolation error and the one with 16~actuators requires a high control effort in spite of a high interpolation accuracy. The setup with 12~actuators provides an appropriate trade-off between the interpolation accuracy and the required control effort, which is used in control algorithm validation.
%12: 0.0139; 8: 0.0346 ;  16: 0.0037

A MATLAB Toolbox for dynamic Euler-Bernoulli beams simulation provided in Chapter~14 of \cite{Yang:2005} is used in numerical implementation. With this Toolbox, the simulation accuracy can be adjusted by choosing the number of modes used in implementation. In the simulation, we implement System~(\ref{indomain_form}) with initial conditions $h_0(x,0)=-3\times 10^{-3} e^{-400(x-0.8)^2}$ and $h_1(x,0)=0$.  As an undamped beam is unstable in open-loop, the controller tuning is started by determining a suitable value of the closed-loop control gain $k$. The basic outputs $\phi_j(t)$ used in the simulation are Gevrey functions of the same order. To meet the convergence condition given in Proposition~\ref{proposition 2}, the parameter in \eqref{Gevrey function} is set to $\varepsilon = 1.111$. The corresponding feedforward control signals with $T = 5$, $\alpha_1, \ldots, \alpha_{12}$, that steer the beam to deform are illustrated in Fig.~\ref{fig: controls}. The evolution of beam shapes and the regulation error are depicted in Fig.~\ref{Fig: beam deformation}. It can be seen that the beam is deformed to the desired shape and the regulation error vanishes along the entire beam, which confirms the expected performance of the developed control scheme.

%%%%%%%%%%%%%%%%%%%%%%%%%%%%%%%%%%%%%%%%%%%%%%%
\begin{figure}[h]\
  \centering
  \subfigure[]{\label{No of actuators 1}\includegraphics[scale=.5]{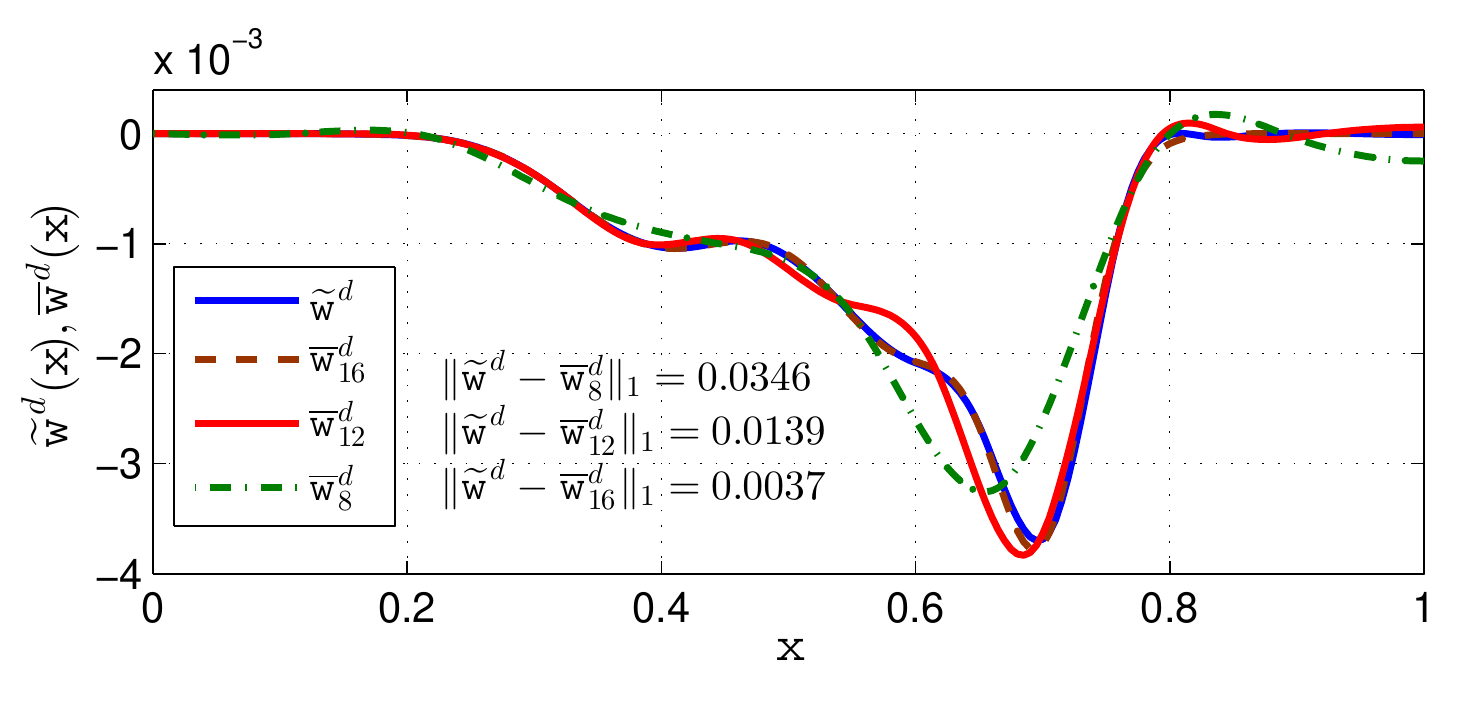}}
  \subfigure[]{\label{No of actuators 2}\includegraphics[scale=.5]{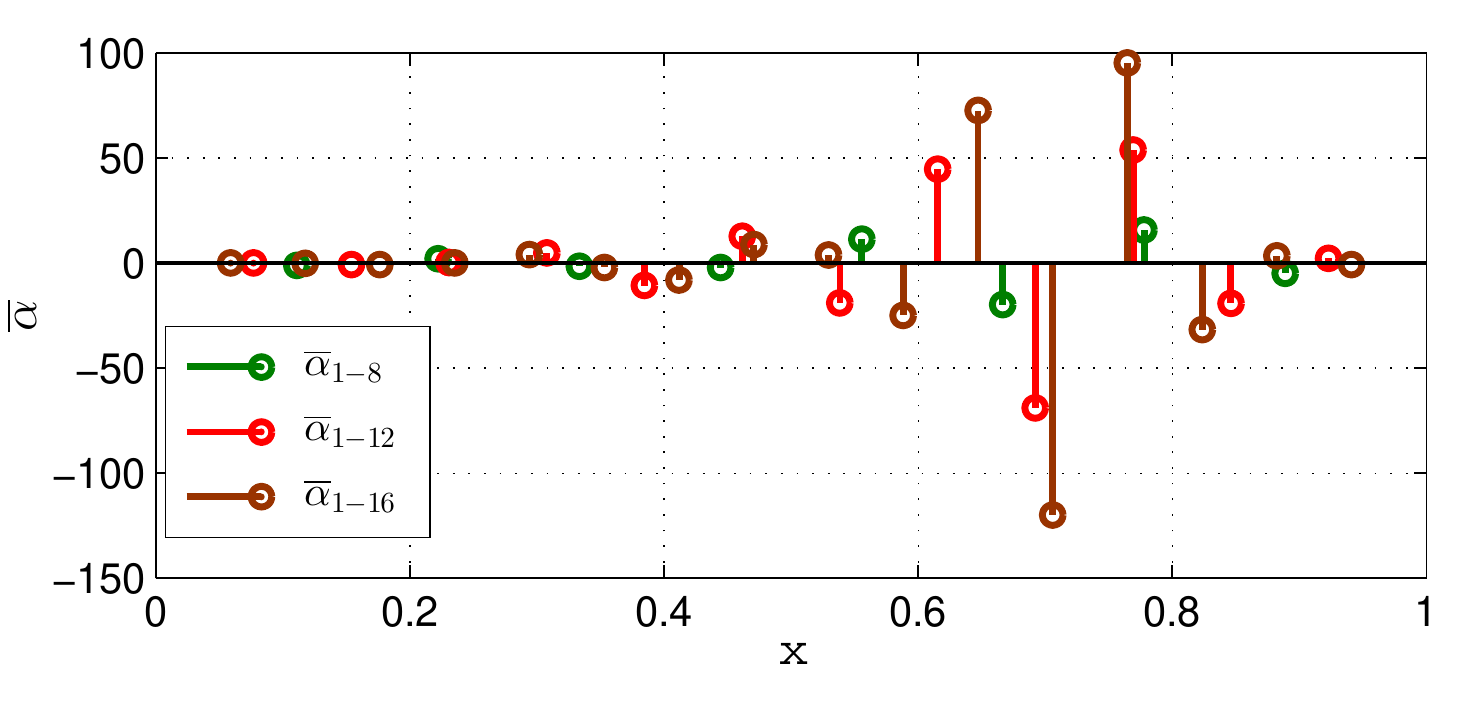}}
  \caption{Effect of number of actuators:
           (a) interpolation accuracy ($\widetilde{w}^d(x)$: desired shape; $\overline{w}^{d}_{n}(x)$: solution of the steady-state beam equation with $n=$8, 12 and 16 in-domain actuators);
           (b) amplitude of steady-state control signals for different setups.}
  \label{Fig: No of actuators}
\end{figure}

%%%%%%%%%%%%%%%%%%%%%%%%%%%%%%%%%%%%%%%%%%%%%%%
\begin{figure}[thpb]\
  \centering
  \includegraphics[scale=.45]{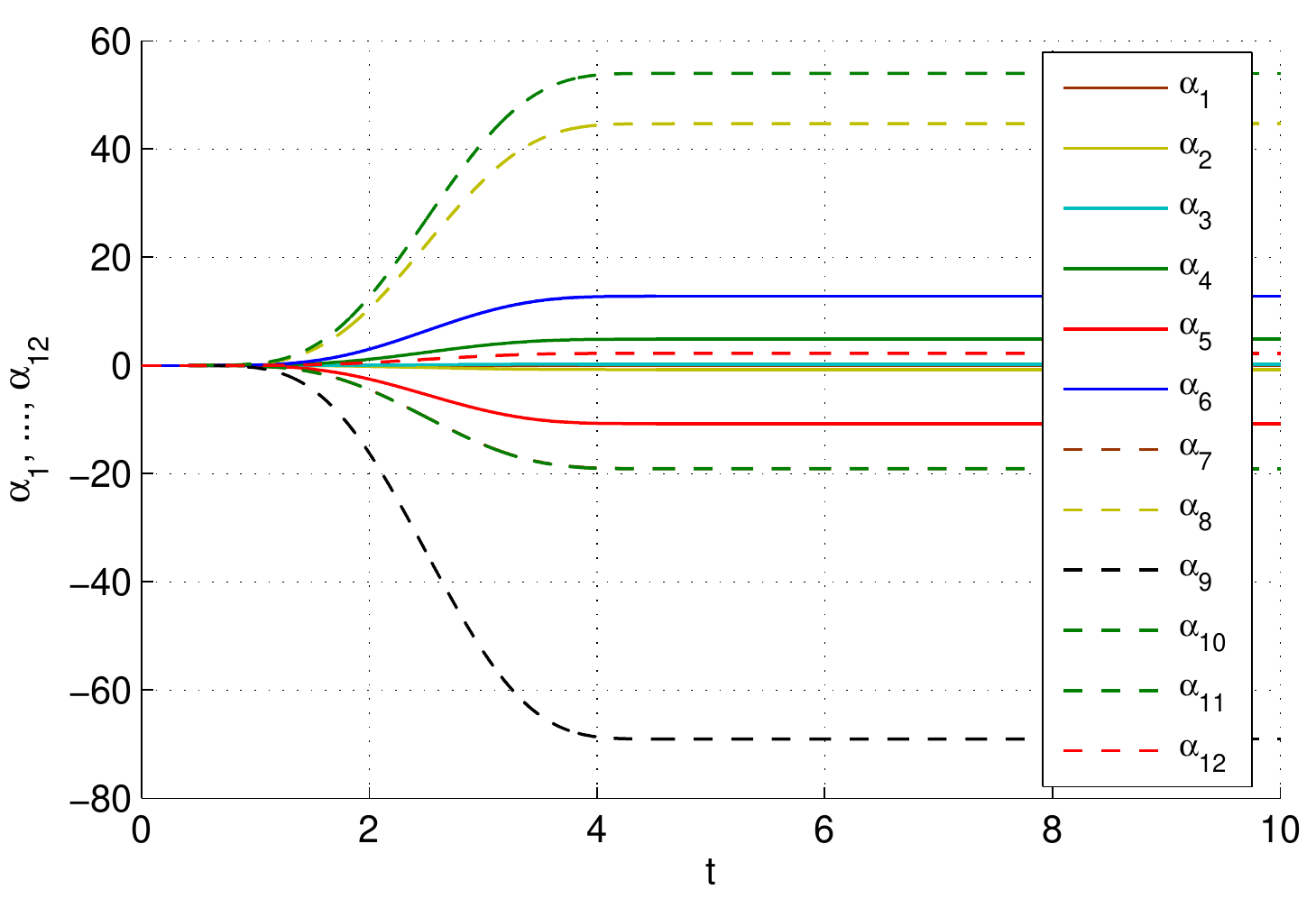}
  \caption{Feedforward control signals: $\overline{\alpha}_1, \ldots, \overline{\alpha}_{12}$.
  }
  \label{fig: controls}
\end{figure}
%%%%%%%%%%%%%%%%%%%%%%%%%%%%%%%%%%%%%%%%%%%%%%%

%%%%%%%%%%%%%%%%%%%%%%%%%%%%%%%%%%%%%%%%%%%%%%%%%%%%
\begin{figure}[thpb]\
  \centering
  %\subfigure[]{\label{Fig: 3D ref}\includegraphics[scale=.5]{}}
  \subfigure[]{\label{Fig: 3D sol}\includegraphics[scale=.5]{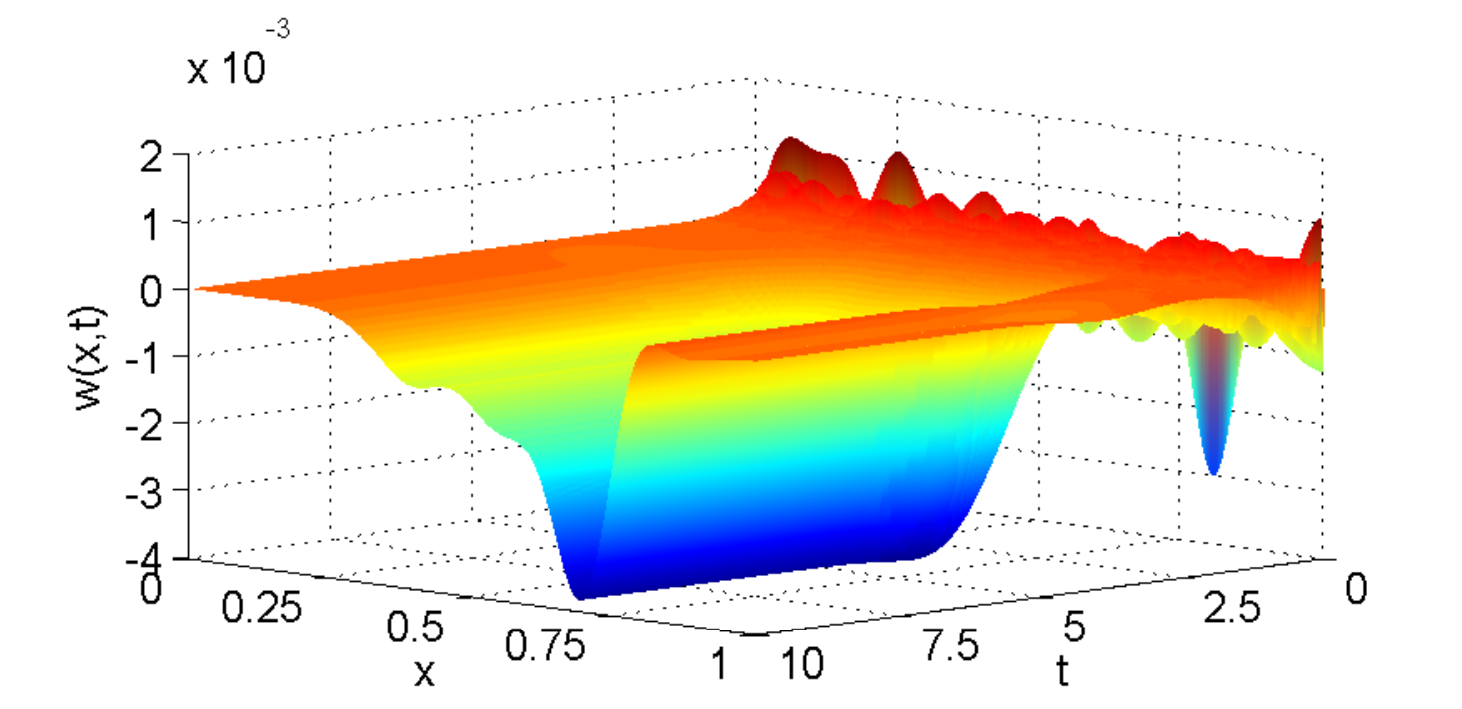}}
  \subfigure[]{\label{Fig: 3D error}\includegraphics[scale=.5]{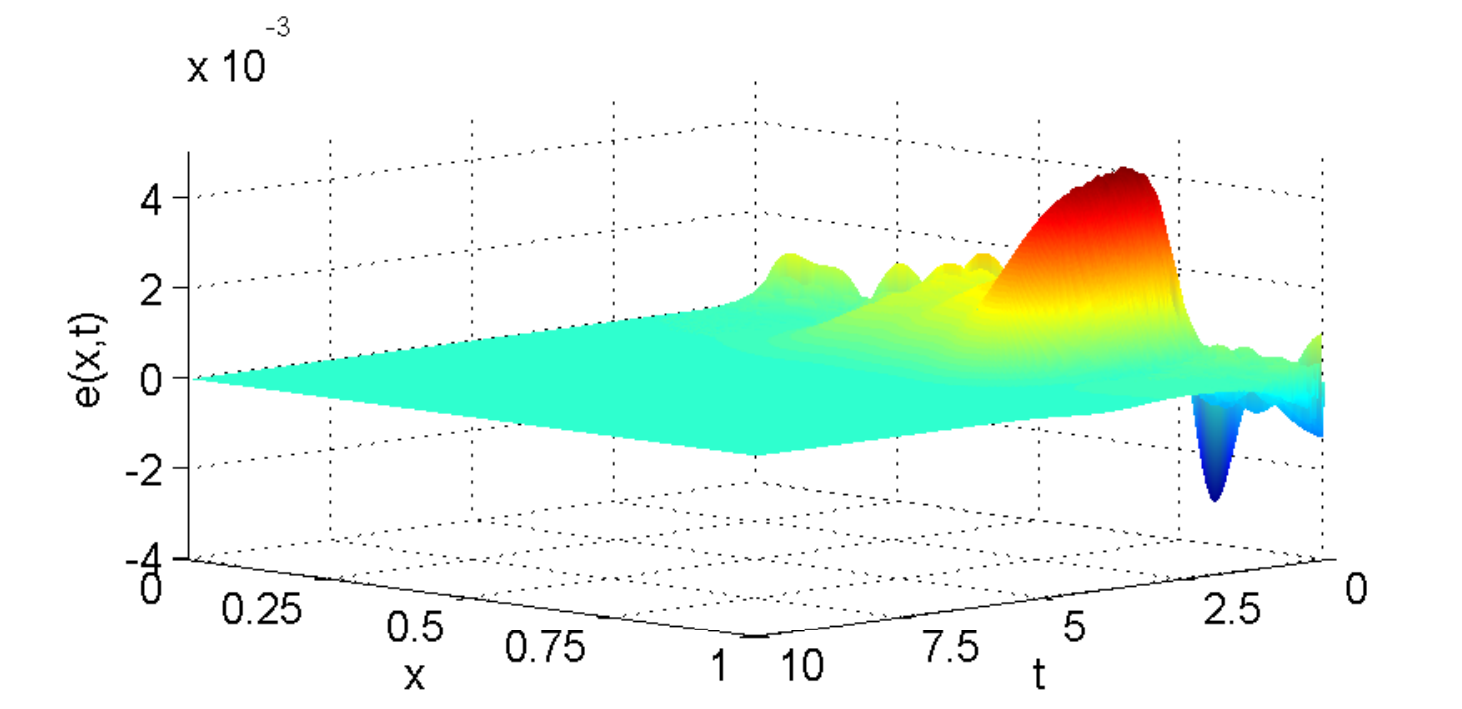}}
  \caption{Set-point control:
           %(a) reference trajectory;
           (b) system response;
           (c) regulation error.}
  \label{Fig: beam deformation}
\end{figure}
%%%%%%%%%%%%%%%%%%%%%%%%%%%%%%%%%%%%%%%%%%%%%%%

%%%%%%%%%%%%%%%%%%%%%%%%%%%%%%%%%%%%%%%%%%%%%%%%%%%%%%%%%%%%%%%%%%%%%
\section{Conclusion}\label{Sec: Conclusion}
This paper presented a solution to the problem of in-domain control of a deformable beam described by an inhomogeneous PDE. A relationship between the original model and a system expressed in a standard boundary control form has been established. Flatness-based motion planning and feedforward control are then employed to explore the degrees-of-freedom offered by the system, while a closed-loop control is used to stabilize the system around the reference trajectories. The validity of the developed approach has been assessed through well-posedness and stability analysis. System performance is evaluated by numerical simulations, which confirm the applicability of the proposed approach. Note that in order to improve the resolution of manipulation, more actuators may be expected. Nevertheless, for the control of one-dimensional beams, the proposed scheme contains only one closed loop, allowing a drastic simplification of control system implementation and operation. This is an important feature for practical applications, such as the control of large-scale deformable micro-mirrors in adaptive optics systems.

\section{Acknowledgment}
This work is supported in part by a NSERC (Natural Science and Engineering Research Council of Canada) Discovery Grant. The second author is also supported in part by the Fundamental Research Funds for the Central Universities (\#682014CX002EM).

\section{Appendices}
\subsection{Proof of Proposition~\ref{proposition 2}}\label{Appendix: Proof of Proposition 2}
We prove the convergence of the power series \eqref{+17} using
Cauchy-Hadamard Theorem. The convergence of \eqref{+18} then follows
easily using the same argument.

Denote in \eqref{+16}:
\begin{equation}\label{b_{n}}
\begin{split}
 b_n=& \left(\sum_{k=0}^{n}\frac{x^{4k+1}}{(4k+1)!(4(n\!-\!k)+2)!} \right.\\
     &\left. -\sum_{k=0}^{n}\frac{x^{4k+3}}{(4k+3)!(4(n\!-\!k))!}\right)(-1)^{n}\phi^{(2n)}_{j}(t).
\end{split}
\end{equation}
Then, \eqref{+17} converges if
$\limsup_{n\rightarrow\infty}\sqrt[n]{|b_n|}<1$.

 Thus, for $0<x_j<1$ and $x \in \left[0,1\right]$, $b_{n}$ can be majorized as:
\begin{align*}
 |b_{n}| <&\left(\left| \sum_{k=0}^{n}\frac{1}{(4k+1)!(4(n\!-\!k)+2)!}\right| \right.\\
          &\left. +\left|\sum_{k=0}^{n}\frac{1}{(4k+3)!(4(n\!-\!k))!}\right|\right)
           \left|\phi_{j}^{(2n)}(t)\right|\notag\\
         <&\left(\left| \sum_{k=0}^{n}\frac{4k+2}{(4k+2)!(4(n\!-\!k))!}\right|\right.\\
          &\left.+\left|\sum_{k=0}^{n}\frac{4k+2}{(4k+2)!(4(n\!-\!k))!}\right|\right)
           \left|\phi_{j}^{(2n)}(t)\right|\notag\\
         \leq & 2(4n+2)\left|\sum_{k=0}^{n}\frac{1}{(4k+2)!(4(n\!-\!k))!}\right|\left|\phi_{j}^{(2n)}(t)\right|\notag\\
         <&2(4n+2)\frac{16^{n}}{(4n+2)!}\left|\phi_{j}^{(2n)}(t)\right|%\notag\\
         \leq \frac{2M}{K^{2n}}\frac{16^{n}((2n)!)^{\sigma}}{(4n+1)!}.
\end{align*}
Therefore,
\begin{align}
\limsup_{n\rightarrow\infty}\sqrt[n]{b_{n}}
\leq &
\limsup_{n\rightarrow\infty}16\left(\frac{2M}{K^{2n}}\right)^{1/n}\frac{((2n)!)^{\sigma/n}}{((4n+1)!)^{1/n}} \nonumber\\
=& \frac{16}{K^{2}}\limsup_{n\rightarrow\infty}\frac{\left(((2n)!)^{1/(2n)}\right)^{2n\sigma/n}}{\left(((4n+1)!)^{1/(4n+1)}\right)^{(4n+1)/n}}.
\end{align}
Applying Stirling's formula $\sqrt[n]{n!} \simeq (n/e)$ yields:
\begin{align}
\limsup_{n\rightarrow\infty}\sqrt[n]{b_{n}} \leq\frac{16e^{4-2
\sigma}}{K^{2}} \limsup_{n\rightarrow\infty}
\frac{(2n)^{2n\sigma/n}}{(4n+1)^{(4n+1)/n}}.\notag
\end{align}
Since
\begin{align}
 &\limsup_{n\rightarrow\infty}\frac{(2n)^{2n\sigma/n}}{(4n+1)^{(4n+1)/n}} \nonumber\\
=& \limsup_{n\rightarrow\infty}
\frac{(2n)^{2\sigma + \frac{3\sigma }{n} }}{(4n+1)^{4+\frac{1}{n}}}
=\left\{
   \begin{array}{ll}
     0, & \hbox{$\sigma<2$;} \\
     \dfrac{1}{16}, & \hbox{$\sigma=2$;} \\
      \infty, & \hbox{$\sigma>2$,}
   \end{array}
 \right.
\end{align}
we can conclude by Cauchy-Hadamard Theorem that
\eqref{+17} converges for $\sigma<2$ with a radius of convergence of infinity and for $\sigma=2$ if $K^{2} >1$.
The series \eqref{+17} diverges if $\sigma>2$.

%%%%%%%%%%%%%%%%%%%%%%%%%%%%%%%%%%%%%%%%%%%%%%%%%%%%%%%%%%%%%%%%%%%%%%%%%%%%%
\subsection{Proof of Claim~\ref{Cl: claim 1}}\label{Appendix: Proof of Claim 1}
Consider the first series in \eqref{expansion of I}. Fixing $m>0$, for $x,x_{j}\in (0,1)$, $j=1,2,\ldots,N$, we have
\begin{align*}
\bigg|\frac{(-1)^{k+1}m^{2k-1}}{(2k-1)}
\frac{(x-x_{j})^{2k+2}}{(2k+2)!} \bigg|\leq \frac{m^{2k-1}}{(2k-1)!}.
\end{align*}
Since $\lim_{n\rightarrow\infty}\frac{1}{\sqrt[n]{n!}}=0$, it follows that for any $a>0$
\begin{align*}
\lim_{n\rightarrow\infty}\sqrt[n]{\frac{a^n}{n!}}=
a\lim_{n\rightarrow\infty}\frac{1}{\sqrt[n]{n!}}=0.
\end{align*}
Thus
$
\sum_{k=0}^{\infty}\frac{m^{2k-1}}{(2k-1)!}
$ is convergent. We conclude that the first series in \eqref{expansion of I} is uniformly convergent. The convergence of the other terms in \eqref{expansion of I} can be proved in the same way.

%%%%%%%%%%%%%%%%%%%%%%%%%%%%%%%%%%%%%%%%%%%%%%%%%%%%%%%%%%%%%%%%%%%%%%%%%%%%%%

\subsection{Proof of Claim~\ref{Cl: claim 2}}\label{Appendix: Proof of Claim 2}

We denoted by $\left[G(x_{i},\xi_{j})\right]_{N\times N}$ the matrix in the left-hand-side of \eqref{Eq: SS responses} formed by Green's functions. We argue by contradiction. If, otherwise, $\left[G(x_{i},\xi_{j})\right]_{N\times N}$ is not invertible, then it is of rank less than $N$. Without loss of generality, assume that there exist $N-1$ constants, $k_{1},\ k_{2}, ..., \ k_{N-1}$, such that
\begin{align*}
G(x_{1},\xi_{N})&= \sum_{i=1}^{N-1} k_{i}G(x_{1},\xi_{i}),\\
G(x_{2},\xi_{N})&= \sum_{i=1}^{N-1} k_{i}G(x_{2},\xi_{i}),\\
                &\;\;\vdots\\
G(x_{N},\xi_{N})&= \sum_{i=1}^{N-1} k_{i}G(x_{N},\xi_{i}).
\end{align*}
The above equations show that
\begin{equation}\label{equation G}
G(x,\xi_{N})
=  \sum_{i=1}^{N-1} k_{i}G(x,\xi_{i})
\end{equation}
has $N$ different positive solutions $x_{1},\ x_{2}, \cdots \ x_{N}$, $x_{i} \in (0,1)$, $i = 1, \ldots, N$.

We consider two cases:\\
(i) If $N>3$, since $\xi_{i}, i=1,\ldots, N,$ are distinguished, $G(x,\xi_{j})$, $i=1,\ldots, N,$ are all different from each other. Hence
\begin{equation*}
G(x,\xi_{N}) \not\equiv k_{1}G(x,\xi_{1})+k_{2}G(x,\xi_{2})+\cdots+k_{N-1}G(x,\xi_{N-1}).
\end{equation*}
Note that $G(x,\xi_{j}), j=1,\ldots, N,$ are of order at most $3$, then \eqref{equation G} has at most $3$ different solutions in $\mathbb{R}$, which is a contradiction. \\
(ii) If $N \leq 3$, it is easy to check that \eqref{equation G} has a solution
 $x=0$, and a pair of solutions $x=x^{0}$ and $x=-x^{0}$ near the origin $0$. By the assumption, \eqref{equation G} has $N$ different positive solutions, then it must be $N=1$, which leads to a contradiction with the non-invertible property of $G(x_{1},\xi_{1})\neq 0$.

\end{document}